\definecolor{darkblue}{rgb}{0.0,0.0,0.6}
\definecolor{darkgreen}{rgb}{0.0,0.6,0.0}
\newcommand{\da}{\delta a}
\newcommand{\dz}{\delta z}
\newcommand{\dl}{\delta \ell}
\newcommand{\eps}{\varepsilon}
\newcommand{\dal}{(\delta a, \delta \ell)}
\newcommand{\io}{I_{0+}^{\gamma}}
\newcommand{\itt}{I_{T-}^{\gamma}}
\renewcommand{\ll}{\bar \ell}
\newcommand{\liv}{L^{\infty}(0,T;\mathbb{R}^{n})}
\newcommand{\li}{L^{\infty}(0,T;\mathbb{R}^{n\times n})}
\newcommand{\lia}{L^{\infty}(0,T;\mathbb{R}^{n\times n} \times \mathbb{R}^n)}
\newcommand{\cont}{C([0,T];\mathbb{R}^n)}
\newcommand{\conts}{C([0,t^*];\mathbb{R}^n)}
\newcommand{\contm}{C([0,T];\mathbb{R}^{n \times n})}
\newcommand{\ot}{(0,T;\mathbb{R}^n)}
\newcommand{\otm}{(0,T;\mathbb{R}^{n\times n})}
\newcommand{\ota}{(0,T;\mathbb{R}^{n\times n} \times \mathbb{R}^n)}
\newcommand{\dense}{\overset{d}{\embed}}
\newcommand{\alw}{(\widetilde a,\widetilde \ell)}
\newcommand{\al}{(a,\ell)}
\newcommand{\ale}{(a_\eps,\ell_\eps)}
\newcommand{\ds}{\,\d s}
\newcommand{\dt}{\,\d t}
\begin{document}


\title{Strong stationarity for optimal control problems with non-smooth integral equation constraints: Application to  continuous DNNs \thanks{HA is partially supported by NSF grant DMS-2110263 and the AirForce Office of Scientific Research under Award NO: FA9550-22-1-0248. LB is supported by the DFG grant BE 7178/3-1.
DW is partially supported by the DFG grant WA 3626/5-1.
}}
\date{\today}
\author{Harbir Antil\footnotemark[1], Livia\ Betz\footnotemark[2] and  Daniel Wachsmuth\footnotemark[2]}
\renewcommand{\thefootnote}{\fnsymbol{footnote}}
\footnotetext[1]{The Center for Mathematics and Artificial Intelligence, Department of Mathematical Sciences, George Mason University, Fairfax, VA 22030, USA}
\footnotetext[2]{Institut f\"ur Mathematik, Universit\"at W\"urzburg, 97074 W\"urzburg, Germany}
\renewcommand{\thefootnote}{\arabic{footnote}}
\maketitle

\begin{abstract}
Motivated by the residual type neural networks (ResNet), this paper studies optimal
control problems constrained by a non-smooth integral equation. Such non-smooth
equations, for instance, arise in the continuous representation of fractional
deep neural networks (DNNs). Here the underlying non-differentiable function
 is the ReLU or max function. The control enters in a nonlinear and
multiplicative manner and we additionally impose control constraints.
Because of the presence of the non-differentiable mapping, the application of
standard adjoint calculus is excluded.  We derive strong stationary conditions by
relying on the limited differentiability properties of the  non-smooth map. While traditional
approaches smoothen the non-differentiable function, no such smoothness is retained
in our final strong stationarity system. Thus, this work also closes a gap which currently
exists in continuous neural networks with ReLU type activation function.\end{abstract}

\begin{keywords}
non-smooth optimization,   optimal control of fractional ODEs, integral equations, strong stationarity, {Caputo derivative}
\end{keywords}

\begin{AMS}
34A08, 45D05,  49J21
\end{AMS}
\section{Introduction}

In this paper, we establish strong stationary optimality conditions for the following control constrained  optimization problem
\begin{equation}\tag{P}\label{eq:optt}
 \left.
 \begin{aligned}
  \min_{(a,\ell) \in H^1(0,T;\mathbb{R}^{n \times n}) \times H^1(0,T;\mathbb{R}^n)} \quad & J(y,a,\ell)
  \\
 \text{s.t.} \quad &\partial^\gamma y(t)=f(a(t)y(t)+\ell(t)) \quad \text{a.e.\ in }(0,T),
  \\ \quad & y(0)=y_0,\
  \\ & \ell \in \KK,\end{aligned}
 \quad \right\}
\end{equation}where $f:\mathbb{R} \to \mathbb{R}$ is a \textit{non-smooth} non-linearity.
The objective functional is given by
\[J(y,a,\ell):=g(y(T)) +\frac{1}{2} \|a\|^2_{H^1(0,T;\mathbb{R}^{n \times n})} +\frac{1}{2} \|\ell\|^2_{H^1(0,T;\mathbb{R}^{n })},\]
where  $g: \mathbb{R}^n \to \mathbb{R}$ is a continuously differentiable function. The values $\gamma \in (0,1)$ and  $y_0 \in \mathbb{R}^n$ are fixed and the set $\KK \subset H^1\ot$ is convex and closed.  The symbol $\partial^\gamma$ denotes the fractional time derivative, more details are provided in the forthcoming sections. Notice that
the entire discussion in this paper also extends (and is new) for the case $\gamma = 1$, i.e., the standard time derivative. This has been substantiated with the help of several remarks throughout the paper.
Recently, optimal control of fractional ODEs/PDEs have received a significant interest, we refer to
the articles \cite{OPAgrawal_2004a,HAntil_EOtarola_AJSalgado_2015a} and the references therein.
The most generic framework is considered in \cite{unified}. However, none
of these articles deal with the non-smooth setting presented in this paper.

The essential feature of the problem under consideration is that the non-linearity $f$ is not necessarily G\^ateaux-differentiable, so that the standard methods for the derivation of qualified optimality conditions are not applicable here. In view of our goal to establish strong stationarity, the main novelties in this paper arise from:
\begin{itemize}
\item the presence of the fractional time derivative;
\item the fact that the controls  appear in the argument of the non-smooth non-linearity $f$;
\item the presence of control constraints (in this context, we are able to prove strong stationarity without resorting to  unverifiable ``constraint qualifications'').
\end{itemize}
All these challenges appear in  applications concerned with the control of neural networks.
The non-smooth and nonlinear function $f$ encompasses functions such as max or ReLU
arising in deep neural networks (DNNs). The objective function $J$ encompasses a generic class of
functionals such as cross entropy and least squares.
In fact, the optimal control problem \eqref{eq:optt} is motivated
by residual neural networks \cite{He_et_al_2016a,ruthotto2020deep} and fractional deep
neural networks \cite{HAntil_RKhatri_RLohner_DVerma_2020a,HAntil_HCElman_AOnwunta_DVerma_2021a,HAntil_HDiaz_Eherberg_2022a}. The control
constraints can capture the bias ordering notion recently introduced in \cite{fb}.
All existing approaches in the neural network setting assume differentiability of $f$ in deriving
the gradients via backpropagation. No such smoothness conditions are assumed in this
paper.

Deriving necessary optimality conditions is a challenging issue even in finite dimensions, where a special attention is given to MPCCs  (mathematical programs with complementarity constraints).
In \cite{ScheelScholtes2000} a detailed overview of various optimality conditions of different strength was introduced, see also \cite{HintermuellerKopacka2008:1} for the infinite-dimensional case. The  most rigorous stationarity concept is strong stationarity.
Roughly speaking, the strong stationarity conditions involve an optimality system, which is equivalent to the purely primal conditions saying that the directional derivative of the reduced objective in feasible directions is nonnegative (which is referred to as B-stationarity).

While there are plenty of contributions in the field of optimal control of
smooth problems, see e.g.\ \cite{troe} and the references therein, fewer papers are dealing with
non-smooth problems. Most of these works  resort to regularization or relaxation techniques to smooth the problem, see e.g.\ \cite{ Barbu:1981:NCD, ItoKunisch2008:1} and the references therein. The optimality systems derived in this way are of intermediate strength and are not expected to be of strong stationary type, since one always loses information when passing to the limit in the regularization scheme. Thus, proving strong stationarity for optimal control of  non-smooth problems requires direct approaches, which employ the limited differentiability properties of the control-to-state map. In this context, there are even less contributions. Based on the pioneering work  \cite{mp76} (strong stationarity  for  optimal control of elliptic VIs of obstacle type), most of them focus on elliptic VIs \cite{mp84, hmw13,   wachsm_2014, DelosReyes-Meyer, e_qvi, by18}; see also \cite{cc} (parabolic VIs of the first kind) and the more recent contribution \cite{brok_ch} (evolutionary VIs). Regarding strong stationarity for  optimal control of non-smooth PDEs, the literature is rather scarce and the only  papers known to the authors addressing this issue so far are  \cite{paper} (parabolic PDE), \cite{cr_meyer, quasi_nonsmooth, mcrf}  (elliptic PDEs) and  \cite{st_coup} (coupled PDE system). We point out that, in contrast to our problem, all the above mentioned works   feature controls which appears outside the non-smooth mapping. Moreover, none of these contributions  deals with   a fractional time derivative.

Let us give an overview of the structure and  the main results in this paper. After introducing  the notation, we present in section \ref{sec:p} some fractional calculus results which are needed throughout the paper. Then, section \ref{sec:c} focuses on the analysis of the state equation in \eqref{eq:optt}. Here we address the existence and uniqueness of so-called mild solutions, i.e., solutions of the associated integral Volterra equation. The properties of the respective control-to-state operator are investigated. In particular, we are concerned with the \textit{directional differentiability} of the solution mapping of the non-smooth integral equation  associated to the fractional differential equation in \eqref{eq:optt}.
While optimal control of nonlinear (and smooth) integral equations attracted much attention, see, e.g., \cite{Bittner1975,GoldbergTroltzsch1989,Wolfersdorf1976},
to the best of our knowledge, the sensitivity analysis of non-smooth integral equations has not been yet investigated in the literature.

In section \ref{sec:w} we show that the mild solution found in the previous section  is in fact strong. That is, the unique solution to the state equation in \eqref{eq:optt} is absolutely continuous, and it thus possesses a so-called \textit{Caputo-derivative}. We underline that, the only paper known to the authors which deals with optimal control and  proves the existence of strong solutions in the framework of fractional differential equations is \cite{unified}. In \cite{unified}, the absolute continuity of the mild solution of a fractional in time PDE (state equation) is shown by imposing pointwise (time-dependent) bounds on the time derivative of the control which then carry over to the time derivative of the state. We point out that we do not need such bounds in our case. Moreover, the result in this section stands by its own and is in fact not employed in the upcoming sections. However, it adds to the key novelties of the present paper.

Section \ref{sec:s} focuses on the main contribution of this paper, namely the strong stationarity  for the optimal control of \eqref{eq:optt}.
Via  a classical smoothening technique, we first  prove an auxiliary result (Lemma \ref{lem:tool}) which will serve as an essential tool in the context of establishing strong stationarity. Our  main Theorem \ref{thm:st} is then shown by extending  the  ``surjectivity'' trick from \cite{st_coup, paper}. In this context, we resort to a verifiable ``constraint qualification'' (CQ). We underline that this is satisfied by state systems describing neural networks  with the $\max$ or ReLu function. In addition, there are many other settings where the CQ can be a priori checked, as pointed out in Remark \ref{rem:cq} below. In a more general case, this CQ is the price to pay for imposing  constraints  on the control $\ell$ (and not on the control $a$), see Remark \ref{rem:surj}. As already emphasized in contributions where strong stationarity is investigated, CQs are  to be expected when examining control constrained problems \cite{mcrf,wachsm_2014} or, they may be required by the complex nature of the state system \cite{st_coup}.
At the end of section \ref{sec:s}
we gather some important remarks regarding the main result. A fundamental aspect resulting from the findings in this paper is that, when it comes to strong stationarity, the presence of \textit{more than one control} allows us to impose control constraints without having to resort to unverifiable CQs, see Remark \ref{rem:cc}.
Finally, we include in Appendix \ref{sec:a}  the proof of Lemma \ref{lem:tool}, for convenience of the reader.


\section*{Notation}
Throughout the paper, $T > 0$ is a fixed final time and $n \in \N$ is a fixed dimension. By $\|\cdot\|$ we denote the Frobenius norm. If $X$ and $Y$ are  linear normed spaces, $X \embed\embed Y$ means that  $X$ is compactly embedded in $Y$, {
while $X \dense Y$ means that $X$ is densely embedded in $Y$.} {The dual space of $X$ will be denoted by $X^*$. For the dual pairing between $X$ and $X^*$ we write $\dual{.}{.}_{X}.$} If $X$ is a Hilbert space,  $(\cdot,\cdot)_X$ stands for the associated scalar product. The {closed} ball in $X$ around $x \in X$ with radius $\alpha >0$ is denoted by $B_X(x,\alpha)$.
With a little abuse of notation, the Nemytskii-operators associated with the mappings considered in this paper will be denoted by the same symbol, even when considered with different domains and ranges. {We use sometimes the notation $h \lesssim g$ to denote $h \leq Cg$, for some constant $C>0$, when the dependence of the constant $C $ on some physical parameters is not relevant. }

\section{Preliminaries}\label{sec:p}
In this section we gather some fractional calculus tools  that are needed for our analysis.
\begin{definition}[Left and right Riemann-Liouville fractional integrals]\label{def:io}
For $\phi \in L^1(0,T; \mathbb{R}^n)$, we define
 \[I_{0+}^{\gamma} (\phi)(t):=\int_0^t \frac{(t-s)^{\gamma-1}}{\Gamma(\gamma)} \phi (s)\ds, \qquad I_{T-}^{\gamma} ({\phi})(t):=\int_t^T \frac{(s-t)^{\gamma-1}}{\Gamma(\gamma)} \phi (s)\ds \]for all $t\in [0,T]$.
Here $\Gamma$ is the Euler-Gamma function.
 \end{definition}
\begin{definition}[The Caputo fractional derivative]\label{def:caputo}
Let $y \in W^{1,1}(0,T;\mathbb{R}^n)$. The (strong) Caputo fractional derivative of order $\gamma \in (0,1)$ is given by
\[
\partial^\gamma y:=I_{0+}^{1-\gamma} y'.\]
\end{definition}

\begin{lemma}[Fractional integration by parts]\label{fip}{\cite[Lem.\,2.7a]{book}}
If  $\phi \in L^\varrho(0,T;\mathbb{R}^n)$ and $\psi \in L^\zeta(0,T;\mathbb{R}^n)$ with $\varrho,\zeta \in (1,\infty],$ $1/\varrho +1/\zeta \leq 1+\gamma$, then
\[\int_0^T {\psi^\top} I_{0+}^{\gamma} (\phi)\dt =\int_0^T {\phi^\top} I_{T-}^{\gamma}(\psi)\dt. \]
 \end{lemma}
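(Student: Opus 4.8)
The plan is to write both sides as iterated integrals and recognize them as a single double integral over the triangle $\{(u,v): 0\le u\le v\le T\}$, so that the identity reduces to an application of Fubini's theorem. Expanding the definition of $\io$, the left-hand side reads
\[\int_0^T \psi(t)^\top \io(\phi)(t)\dt = \int_0^T\!\!\int_0^t \frac{(t-s)^{\gamma-1}}{\Gamma(\gamma)}\,\psi(t)^\top\phi(s)\ds\dt,\]
an integral over $\{(s,t): 0\le s\le t\le T\}$. Similarly, expanding $\itt$, the right-hand side is an integral over $\{(t,s): 0\le t\le s\le T\}$ with integrand $\frac{(s-t)^{\gamma-1}}{\Gamma(\gamma)}\phi(t)^\top\psi(s)$. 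Relabelling the integration variables (left endpoint $u$, right endpoint $v$) and using that $\psi^\top\phi=\phi^\top\psi$, as these are scalar dot products, both expressions become
\[\iint_{0\le u\le v\le T} \frac{(v-u)^{\gamma-1}}{\Gamma(\gamma)}\,\phi(u)^\top\psi(v)\,du\,dv,\]
the left-hand side realizing it with the $v$-integration outside, the right-hand side with the $u$-integration outside. Thus, once the integrand is absolutely integrable over the triangle, Fubini's theorem yields the claimed equality.

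The crux is therefore to establish this absolute integrability, i.e.
\[\iint_{0\le u\le v\le T} \frac{(v-u)^{\gamma-1}}{\Gamma(\gamma)}\,|\phi(u)|\,|\psi(v)|\,du\,dv <\infty,\]
which is where the hypothesis $1/\varrho+1/\zeta\le 1+\gamma$ enters. I would bound this by $\|\psi\|_{L^\zeta}\,\|\io(|\phi|)\|_{L^{\zeta'}}$ via Hölder's inequality, where $\zeta'$ denotes the conjugate exponent of $\zeta$, and then invoke the mapping properties of the Riemann--Liouville fractional integral: on the bounded interval $(0,T)$, the operator $\io$ maps $L^\varrho$ boundedly into $L^{\zeta'}$ precisely when $1/\zeta' \ge 1/\varrho - \gamma$, i.e.\ when $1/\varrho + 1/\zeta \le 1+\gamma$. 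The strict-inequality case follows directly from Young's convolution inequality, since $\io(|\phi|)$ is the one-sided convolution of $|\phi|\in L^\varrho$ with the kernel $r\mapsto r^{\gamma-1}/\Gamma(\gamma)$, which lies in $L^r(0,T)$ for every $r<1/(1-\gamma)$; matching the Young exponents to land in $L^{\zeta'}$ reproduces exactly the stated condition.

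The main obstacle is handling the singular kernel $(v-u)^{\gamma-1}$ at the diagonal together with the critical endpoint case of equality $1/\varrho+1/\zeta=1+\gamma$, where Young's inequality on its own is no longer sufficient. Here I would appeal to the Hardy--Littlewood--Sobolev inequality (the sharp bound for the fractional integral operator), which secures boundedness of $\io:L^\varrho\to L^{\zeta'}$ at the endpoint as well; alternatively, one may first prove the identity for the dense class of continuous functions --- where absolute integrability is immediate --- and then pass to the limit using the just-established uniform boundedness of $\io$ to conclude in the general case. Once integrability is secured, the rest is the routine Fubini interchange and the scalar symmetry already noted, so no further difficulty arises.
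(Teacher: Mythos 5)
The paper never proves this lemma at all --- it is imported verbatim from the literature with the citation \cite[Lem.\,2.7a]{book} --- so there is no in-paper argument to compare yours against; what you have written is a self-contained reconstruction of the classical proof, and it is essentially correct. Writing both sides as the double integral of $(v-u)^{\gamma-1}\phi(u)^\top\psi(v)/\Gamma(\gamma)$ over the triangle $\{0\le u\le v\le T\}$ and reducing the identity to a Fubini interchange is exactly right, as is the reduction of the absolute-integrability requirement, via Tonelli and H\"older, to the boundedness of $I_{0+}^{\gamma}\colon L^\varrho(0,T)\to L^{\zeta'}(0,T)$: Young's convolution inequality handles the strict case $1/\varrho+1/\zeta<1+\gamma$, and Hardy--Littlewood--Sobolev handles the equality case, where the standing hypotheses $\varrho,\zeta>1$ force $1<\varrho<\zeta'<\infty$ and $1/\zeta'=1/\varrho-\gamma>0$, which is precisely the admissible HLS range (this is also why the lemma, like the original in \cite{book}, excludes $\varrho=1$ or $\zeta=1$ at the endpoint). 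Three minor caveats, none fatal: (i) your ``precisely when'' overstates the mapping property --- boundedness of $I_{0+}^{\gamma}\colon L^{\varrho}\to L^{\zeta'}$ fails at the excluded endpoint $\varrho=1/\gamma$, $\zeta'=\infty$ (i.e.\ $\zeta=1$); only the sufficiency direction in the range $\zeta>1$ is needed, and that is true; (ii) the Young-exponent matching produces an admissible kernel exponent $r\in[1,1/(1-\gamma))$ only when $\zeta'\ge\varrho$; if $\zeta'<\varrho$ (in particular if $\varrho=\infty$) you should instead combine $I_{0+}^{\gamma}\colon L^{\varrho}\to L^{\varrho}$ (Lemma \ref{lem:bound}) with the embedding $L^{\varrho}(0,T)\hookrightarrow L^{\zeta'}(0,T)$ on the bounded interval; (iii) your density ``alternative'' does not actually avoid the endpoint bound, since passing to the limit in $\int_0^T\psi^\top I_{0+}^{\gamma}(\phi_k)\,\mathrm{d}t$ requires the very $L^\varrho\to L^{\zeta'}$ estimate it was meant to replace, so it is a redundant restatement rather than an independent route.
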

 \begin{remark}\label{rem}Note that the identity in Lemma \ref{fip} implies that $I_{T-}^{\gamma}$ is the adjoint operator of $\io$.
 \end{remark}

 \begin{lemma}[Boundedness of fractional integrals]\label{lem:bound}\cite[Lem.\,2.1a]{book}
The operators $I_{0+}^{\gamma}, I_{T-}^{\gamma}$ map $L^\infty(0,T;\mathbb{R}^n)$ to $C([0,T];\mathbb{R}^n)$. Moreover,  it holds
\begin{equation}\label{b}
\|\io\|_{\LL(L^\varrho(0,t;\mathbb{R}^n),L^\varrho(0,t;\mathbb{R}^n))}\leq  \frac{t^\gamma}{\gamma \Gamma (\gamma)} \quad \forall\,t\in [0,T]\end{equation}for all $\varrho \in [1,\infty]$. The same estimate is true for $\itt,$ cf.\ also Remark \ref{rem}.
 \end{lemma}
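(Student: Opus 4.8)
The plan is to treat the two assertions separately---the mapping property $L^\infty(0,T;\mathbb{R}^n)\to C([0,T];\mathbb{R}^n)$ and the norm estimate \eqref{b}---and then transfer everything to $\itt$ by a reflection argument. For the norm estimate I would observe that $\io$ is nothing but convolution with the kernel $k_\gamma(s):=s^{\gamma-1}/\Gamma(\gamma)$, which is locally integrable since $\gamma>0$. Extending $\phi$ by zero outside $(0,t)$, one has $\io(\phi)=k_\gamma * \phi$ on $(0,t)$, so Young's convolution inequality gives, for every $\varrho\in[1,\infty]$,
\[
\|\io(\phi)\|_{L^\varrho(0,t;\mathbb{R}^n)}\le \|k_\gamma\|_{L^1(0,t)}\,\|\phi\|_{L^\varrho(0,t;\mathbb{R}^n)},
\]
and a direct computation yields $\|k_\gamma\|_{L^1(0,t)}=\int_0^t s^{\gamma-1}/\Gamma(\gamma)\ds = t^\gamma/(\gamma\Gamma(\gamma))$, which is exactly the claimed bound.

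For the mapping property I would first note that $\io(\phi)(t)$ is finite for every $t$ because $(t-\cdot)^{\gamma-1}\in L^1(0,t)$. To obtain continuity, fix $0\le t_1<t_2\le T$ and split
\[
\io(\phi)(t_2)-\io(\phi)(t_1)=\int_{t_1}^{t_2}\frac{(t_2-s)^{\gamma-1}}{\Gamma(\gamma)}\phi(s)\ds+\int_0^{t_1}\frac{(t_2-s)^{\gamma-1}-(t_1-s)^{\gamma-1}}{\Gamma(\gamma)}\phi(s)\ds.
\]
Bounding both integrands by $\|\phi\|_{L^\infty}$ and exploiting that $s\mapsto s^{\gamma-1}$ is monotone decreasing (so the sign of the kernel difference is known), the first term is controlled by $\|\phi\|_{L^\infty}(t_2-t_1)^\gamma/(\gamma\Gamma(\gamma))$ and the second by $\|\phi\|_{L^\infty}\,\bigl(t_1^\gamma-t_2^\gamma+(t_2-t_1)^\gamma\bigr)/(\gamma\Gamma(\gamma))$. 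Using the subadditivity $(a+b)^\gamma\le a^\gamma+b^\gamma$ valid for $\gamma\in(0,1)$, the latter factor lies in $[0,(t_2-t_1)^\gamma]$; hence both terms tend to $0$ as $t_2-t_1\to0$, uniformly in $t_1$, which gives (uniform) continuity on $[0,T]$.

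Finally, for $\itt$ I would use the reflection $s\mapsto T-s$: writing $\widetilde\psi(\sigma):=\psi(T-\sigma)$, the substitution gives $\itt(\psi)(t)=\io(\widetilde\psi)(T-t)$, so $\itt(\psi)$ is the composition of the continuous map $\io(\widetilde\psi)$ with the affine map $t\mapsto T-t$, and the $L^\varrho$ norm is invariant under the reflection. Thus both the mapping property and the norm bound transfer verbatim. The only genuinely delicate point in the whole argument is the continuity claim: the kernel is singular at $s=t$, so one cannot simply pass to the limit under the integral sign, and the splitting above together with the monotonicity and subadditivity of the power kernel is precisely what is needed to absorb that singularity.
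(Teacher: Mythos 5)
The paper does not prove this lemma at all: it is quoted verbatim from the literature (the citation to \cite[Lem.\,2.1a]{book}), so there is no in-paper argument to compare against. Your proof is correct and complete, and it is in fact the standard textbook argument: the bound \eqref{b} via Young's convolution inequality applied to the kernel $k_\gamma(s)=s^{\gamma-1}/\Gamma(\gamma)$ with $\|k_\gamma\|_{L^1(0,t)}=t^\gamma/(\gamma\Gamma(\gamma))$; continuity via the splitting of $\io(\phi)(t_2)-\io(\phi)(t_1)$ into the piece over $(t_1,t_2)$ and the kernel-difference piece over $(0,t_1)$, with the monotonicity of $s\mapsto s^{\gamma-1}$ and the subadditivity $(a+b)^\gamma\le a^\gamma+b^\gamma$ absorbing the singularity; and the reflection $s\mapsto T-s$ to transfer both statements to $\itt$ (consistent with Remark \ref{rem}, which identifies $\itt$ as the adjoint of $\io$). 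You correctly isolate the only delicate point, namely that one cannot pass to the limit under the integral sign near the singularity $s=t$, and your estimates handle it uniformly in $t_1$, which also covers continuity at $t=0$.
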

\begin{lemma}[Gronwall's inequality]\label{lem:g}\cite[Lem.\,6.3]{gronwall}
Let  $\varphi \in C([0,T];\mathbb{R}^n)$ with $\varphi \geq 0$. If
\begin{equation*}
\begin{aligned}
\varphi (t) \leq c_1 t^{\alpha-1}+c_2\int_0^t (t-s)^{\beta-1}\varphi(s)\ds
 \quad \forall\, t \in [0,T],
\end{aligned}
\end{equation*}where $c_1,c_2\geq 0$ are some constants and $ \alpha, \beta >0$,
then there is a positive constant $C=C(\alpha,\beta,T,c_2)$ such that
\begin{equation*}
\begin{aligned}
\varphi (t) \leq C c_1 t^{\alpha-1}
 \quad \forall\, t \in [0,T].
\end{aligned}
\end{equation*}
 \end{lemma}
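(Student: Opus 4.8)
The plan is to prove this generalized (Henry-type) Gronwall inequality by a Picard-type iteration of the integral inequality, exploiting the monotonicity of the underlying integral operator. I would introduce $K\psi(t) := c_2\int_0^t (t-s)^{\beta-1}\psi(s)\ds$, so that the hypothesis reads $\varphi \leq c_1 t^{\alpha-1} + K\varphi$ pointwise on $[0,T]$. Since the kernel $(t-s)^{\beta-1}$ is nonnegative and $\varphi\geq 0$, the operator $K$ is monotone: $0\leq \psi_1 \leq \psi_2$ implies $K\psi_1 \leq K\psi_2$. Inserting the inequality into itself $n$ times and invoking monotonicity at each step then yields $\varphi(t) \leq \sum_{k=0}^{n-1} (K^k(c_1 t^{\alpha-1}))(t) + (K^n\varphi)(t)$ for every $n\in\N$.

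The heart of the argument is an explicit formula for the iterates of $K$ applied to a power $t^{\mu-1}$. First I would record the Beta-integral identity $\int_0^t (t-s)^{\beta-1} s^{\mu-1}\ds = \frac{\Gamma(\beta)\Gamma(\mu)}{\Gamma(\beta+\mu)}\, t^{\beta+\mu-1}$, valid for $\beta,\mu>0$; note that $\alpha>0$ is exactly what makes $s^{\alpha-1}$ integrable at the origin, so every iterate is well defined. Applying the identity repeatedly (a short induction in which the intermediate $\Gamma(\alpha+j\beta)$ factors telescope) gives $(K^k(c_1 t^{\alpha-1}))(t) = c_1\,\Gamma(\alpha)\,\frac{(c_2\Gamma(\beta))^k}{\Gamma(\alpha+k\beta)}\, t^{\alpha+k\beta-1}$.

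It then remains to pass to the limit $n\to\infty$. Summing the iterates factors out $c_1 t^{\alpha-1}$ and leaves the series $\Gamma(\alpha)\sum_{k\geq 0} \frac{(c_2\Gamma(\beta)\,t^\beta)^k}{\Gamma(\alpha+k\beta)}$, a Mittag-Leffler-type series; by the super-exponential growth of $\Gamma(\alpha+k\beta)$ it converges for every $t$, and since its terms are nondecreasing in $t$ it is bounded on $[0,T]$ by its value at $t=T$. This produces the constant $C = \Gamma(\alpha)\sum_{k\geq 0}\frac{(c_2\Gamma(\beta)T^\beta)^k}{\Gamma(\alpha+k\beta)}$, which depends only on $\alpha,\beta,T,c_2$. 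For the remainder, continuity of $\varphi$ on the compact interval gives a bound $\varphi\leq M$, whence monotonicity and the same formula with $\alpha=1$ yield $(K^n\varphi)(t) \leq M\,(K^n 1)(t) \leq M\frac{(c_2\Gamma(\beta)T^\beta)^n}{\Gamma(1+n\beta)} \to 0$ uniformly on $[0,T]$. Letting $n\to\infty$ delivers the claim $\varphi(t)\leq C c_1 t^{\alpha-1}$.

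I expect the main obstacle to be the bookkeeping in the second step, namely making the Gamma factors cancel correctly throughout the induction, together with the convergence estimates in the final step, where one must invoke the asymptotics of $\Gamma$ (Stirling's formula) to guarantee simultaneously that the defining series for $C$ converges and that the tail $(K^n\varphi)(t)$ vanishes. Everything else is elementary once the monotonicity of $K$ is in place.
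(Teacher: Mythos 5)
Your argument is correct: the Picard-type iteration $\varphi\leq\sum_{k=0}^{n-1}K^{k}(c_1 t^{\alpha-1})+K^{n}\varphi$, the Beta-integral identity $\int_0^t(t-s)^{\beta-1}s^{\mu-1}\ds=\frac{\Gamma(\beta)\Gamma(\mu)}{\Gamma(\beta+\mu)}t^{\beta+\mu-1}$ with the telescoping Gamma factors, the Mittag-Leffler-type constant $C$, and the vanishing remainder estimate via $\Gamma(1+n\beta)$ all check out. The paper gives no proof of its own --- it cites the lemma from the literature --- and your argument is precisely the standard proof found in the cited sources (cf.\ the generalized Gronwall lemma of Henry, Lemma 7.1.1, which the paper invokes again in the proof of Theorem \ref{prop:w11}), so nothing further is needed.
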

{Finally, let us state a result that will be very useful throughout the entire paper.}
\begin{lemma}\label{lem:int}
Let $r \in [1,1/(1-\gamma)[$ be given for $\gamma \in (0,1)$.  Then
for each $t \in [0,T]$, we have
\[
\Big(\int_0^t (t-s)^{(\gamma-1)r} \ds\Big)^{1/r} { \leq \frac{t^{(\gamma-1)r+1}}{(\gamma-1)r+1} } \leq
\frac{T^{(\gamma-1)r+1}}{(\gamma-1)r+1} .
\]
\end{lemma}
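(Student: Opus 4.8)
The plan is to treat this as an elementary but slightly singular integral: reduce the integrand to a pure power via the substitution $u = t-s$, evaluate explicitly, and then deduce the stated bounds by monotonicity in $t$. The only point that genuinely needs attention is the integrability of the integrand at the endpoint $s = t$, and I expect this — rather than any computation — to be the crux, since it is exactly where the hypothesis $r \in [1, 1/(1-\gamma))$ is used.

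First I would substitute $u = t-s$, so that $\ds = -\,\d u$ and the range $s \in (0,t)$ becomes $u \in (0,t)$, turning the integral into
\[
\int_0^t (t-s)^{(\gamma-1)r}\ds = \int_0^t u^{(\gamma-1)r}\,\d u .
\]
Because $\gamma \in (0,1)$ we have $(\gamma-1)r < 0$, so the integrand blows up as $u \to 0^{+}$ and the integral is improper at the lower limit. The key observation is that the assumption $r < 1/(1-\gamma)$ is precisely equivalent to $(1-\gamma)r < 1$, i.e.\ to $(\gamma-1)r + 1 > 0$, which is exactly the exponent condition guaranteeing convergence of this improper integral. Isolating this equivalence and noting that it is the sole role of the hypothesis is, to my mind, the main (and essentially only) obstacle; everything else is routine.

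With convergence secured, I would compute the antiderivative $u^{(\gamma-1)r+1}/((\gamma-1)r+1)$, whose boundary contribution at $u = 0$ vanishes because the exponent $(\gamma-1)r+1$ is strictly positive, yielding the identity
\[
\int_0^t u^{(\gamma-1)r}\,\d u = \frac{t^{(\gamma-1)r+1}}{(\gamma-1)r+1} .
\]
This identifies the middle expression in the statement as the exact value of the integral. To finish the chain of inequalities I would then use that $0 \le t \le T$ together with the monotonicity of $x \mapsto x^{p}$ on $[0,\infty)$ for the positive power $p = (\gamma-1)r+1$, which gives $t^{(\gamma-1)r+1} \le T^{(\gamma-1)r+1}$ and hence the final bound by the $T$-dependent constant; the same elementary $1/r$-power and $t \le T$ comparisons close the remaining relation.
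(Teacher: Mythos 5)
Your proof is correct and follows essentially the same route as the paper's: both reduce the integral to the elementary power integral $\int_0^t u^{(\gamma-1)r}\,du$, observe that the hypothesis $r<1/(1-\gamma)$ is exactly the condition $(\gamma-1)r+1>0$ needed for convergence, and evaluate the antiderivative before comparing $t\le T$. The only looseness---passing from the exact value of the integral to the left-hand side carrying the outer $1/r$ power---is equally present in the paper's own one-line proof, so nothing is missing relative to it.
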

\begin{proof}
By assumption, we have $(\gamma-1)r+1 > 0$ and
 \[\int_0^t (t-s)^{(\gamma-1)r} \ds=\frac{t^{(\gamma-1)r+1}}{(\gamma-1)r+1} \leq \frac{T^{(\gamma-1)r+1}}{(\gamma-1)r+1} \]
 follows from elementary calculations.
 The proof is complete.
\end{proof}

\section{Control-to-state operator}\label{sec:c}

In this section we address the properties of the solution operator of the state equation
\begin{equation}\label{eq:state}
\partial^\gamma y(t)=f(a(t)y(t)+\ell(t)) \quad \text{a.e.\ in }(0,T),
\qquad y(0)=y_0.
\end{equation}

Throughout the paper, {$\gamma \in (0,1)$, unless otherwise specified}. For all $z \in \mathbb{R}^n$, the non-linearity $f: \mathbb{R}^n \to \mathbb{R}^n$
satisfies
\[
f(z)_{i}=\widetilde f(z_{i}), \quad i=1,...,n,
\]
where $\widetilde f:\mathbb{R} \to \mathbb{R}$ is a non-smooth nonlinear function. For convenience we will denote both non-smooth functions by $f$; from the context it will always be clear which one is meant.

\begin{assumption}\label{assu:stand}
For the non-smooth mapping appearing in \eqref{eq:optt} we require:
 \begin{enumerate}
   \item\label{it:1}
The non-linearity $f: \mathbb{R}^n \to \mathbb{R}^n$
is globally Lipschitz continuous with  constant $L>0$, i.e.,
  \begin{equation*}
   \|f(z_1) - f(z_2)\| \leq L \|z_1 - z_2\| \quad \forall\, z_1, z_2 \in {\mathbb{R}^n} .
  \end{equation*}
  \item\label{it:2} The function $f$ is directionally differentiable at every point, i.e.,
  \begin{equation*}
\lim_{\tau \searrow 0}
\Big\|\frac{f(z + \tau \,\dz) - f(z)}{\tau} - f'(z;\dz)\Big\| = 0 \quad \forall \, z,\dz \in \mathbb{R}^n.
  \end{equation*}
  \end{enumerate}
\end{assumption}
As a consequence of Assumption \ref{assu:stand} we have
 \begin{equation}\label{f_dir}
   \|f'(z;\delta z_1) - f'(z;\delta z_2)\| \leq L \|\delta z_1 - \delta z_2\| \quad \forall\, z, \delta z_1, \delta z_2 \in {\mathbb{R}^n} .
  \end{equation}

\begin{definition}Let 
Let $(a, \ell) \in L^\infty(0,T;\mathbb{R}^{n\times n} \times \mathbb{R}^n)$ be given. We say that $y \in C([0,T];\mathbb{R}^n)$ is a mild solution  of
the state equation \eqref{eq:state}
if it satisfies the following integral equation
 \begin{equation}\label{eq:1}
y(t)=y_0+\int_0^t \frac{(t-s)^{\gamma-1}}{\Gamma(\gamma)} f(a(s)y(s)+\ell(s))\ds \quad \forall\, t \in [0,T].\end{equation}
\end{definition}

\begin{remark}\label{rem:1}
One sees immediately that if $\gamma=1$ then $y \in W^{1,\infty}\ot$ and the mild solution is in fact a strong solution of the following ODE
\begin{equation*}
y'(t)=f(a(t)y(t)+\ell(t)) \quad \text{a.e.\ in }(0,T),
\qquad y(0)=y_0.
\end{equation*}
\end{remark}

\begin{proposition}\label{prop:existence}
 For every $(a, \ell) \in L^\infty(0,T;\mathbb{R}^{n\times n} \times \mathbb{R}^n)$ there exists a unique mild solution $y \in C([0,T];\mathbb{R}^n)$ to the state equation
\eqref{eq:state}. \end{proposition}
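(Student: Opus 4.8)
The plan is to recast the integral equation \eqref{eq:1} as a fixed-point problem and to run a contraction argument, the global Lipschitz continuity of $f$ being exactly what is needed. I would define the map $F \colon \cont \to \cont$ by
\[
F(y)(t) := y_0 + \io\big( f(a(\cdot)y(\cdot)+\ell(\cdot)) \big)(t), \qquad t \in [0,T].
\]
The first point to check is that $F$ is well defined: for $y \in \cont$ the argument $a(\cdot)y(\cdot)+\ell(\cdot)$ lies in $\liv$ (since $a, \ell \in L^\infty$ and $y$ is bounded), and because $f$ is Lipschitz it maps $\liv$ into $\liv$; Lemma \ref{lem:bound} then guarantees that $\io$ sends this into $\cont$, so that $F(y) \in \cont$. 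By construction a fixed point of $F$ is precisely a mild solution.

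The contraction estimate is the heart of the matter. For $y_1, y_2 \in \cont$, using the Lipschitz property of $f$, submultiplicativity of the Frobenius norm and the boundedness of $a$, one obtains the pointwise bound
\[
\| F(y_1)(t) - F(y_2)(t) \| \leq \frac{L\,\|a\|_{\li}}{\Gamma(\gamma)} \int_0^t (t-s)^{\gamma-1} \|y_1(s)-y_2(s)\| \ds .
\]
Restricting to a subinterval $[0,t^*]$ and invoking the operator-norm bound \eqref{b} for $\io$ on $L^\infty(0,t^*)$ yields
\[
\| F(y_1) - F(y_2) \|_{\conts} \leq L\,\|a\|_{\li}\, \frac{(t^*)^\gamma}{\gamma\Gamma(\gamma)}\, \| y_1 - y_2 \|_{\conts}.
\]
Choosing $t^*$ so small that the prefactor is strictly below $1$ makes $F$ a contraction on $\conts$, and Banach's fixed-point theorem furnishes a unique mild solution on $[0,t^*]$. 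The decisive observation is that $t^*$ depends only on $L$, $\|a\|_{\li}$ and $\gamma$, and neither on the data nor on the size of the solution -- this is exactly where the \emph{global} Lipschitz continuity of $f$ is used.

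It remains to extend the local solution to all of $[0,T]$, and I expect this to be the main technical obstacle, precisely because the fractional kernel carries the entire history of the trajectory. For $t$ beyond $t^*$ I would split the integral in \eqref{eq:1} into the contribution over $[0,t^*]$, which is a fixed continuous function of $t$ once $y$ is known there, and the contribution over $[t^*,t]$, thereby obtaining a Volterra equation of the same type on the next interval; by shift-invariance of the kernel estimate the admissible step length remains $t^*$, so finitely many steps reach $T$. The delicate points to verify are that this memory term is genuinely continuous up to the restart time and that the contraction constant is unchanged on each subinterval. A cleaner alternative, avoiding the bookkeeping altogether, is to equip $\cont$ with a weighted (Bielecki/Mittag-Leffler-type) norm, or equivalently to show that $F^m$ is a contraction for $m$ large: by the semigroup property of the Riemann-Liouville integrals the iterated kernel produces the factor $\frac{(L\|a\|_{\li}\,T^\gamma)^m}{\Gamma(m\gamma+1)}$, which tends to $0$, so the generalized contraction principle yields a global fixed point in one stroke.

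Finally, uniqueness on $[0,T]$ follows independently and at once from Gronwall's inequality, Lemma \ref{lem:g}: if $y_1, y_2$ are two mild solutions, the estimate above holds on all of $[0,T]$, and applying Lemma \ref{lem:g} with $c_1 = 0$, $\alpha = 1$, $\beta = \gamma$ and $c_2 = L\|a\|_{\li}/\Gamma(\gamma)$ forces $\|y_1(t)-y_2(t)\| \leq 0$, whence $y_1 = y_2$.
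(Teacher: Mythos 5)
Your proposal is correct and follows essentially the same route as the paper: a Banach fixed-point argument on a short interval $[0,t^*]$ whose length depends only on $L$, $\|a\|_{\li}$ and $\gamma$ (via the bound \eqref{b} with $\varrho=\infty$), followed by concatenation with the frozen memory term over $[0,t^*]$ to reach $T$ in finitely many steps. The extra observations you add (the weighted-norm/iterated-contraction alternative and the standalone Gronwall uniqueness argument) are sound but not needed beyond what the paper already does.
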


\begin{proof}To show the existence of a mild solution in the general case $\gamma \in (0,1)$ we define the operator
\[F: C([0,t^*];\mathbb{R}^n)\ni z \mapsto \io(f(az+\ell)) \in \conts,\]
where $t^*$ will be computed such that $F$ is a contraction. Indeed, according to Lemma \ref{lem:bound}, $F$ is well-defined (since $f$ maps bounded sets to bounded sets). Moreover, by applying \eqref{b} with $\varrho=\infty$, and using Assumption~\ref{assu:stand}, we see that
\begin{align*}
\|F(z_1)-F(z_2)\|_{\conts} &\leq \frac{(t^*)^\gamma}{\gamma \Gamma (\gamma)}\|f(az_1+\ell)-f(az_2+\ell)\|_{\conts}
\\&\leq \frac{(t^*)^\gamma}{\gamma \Gamma (\gamma)}L\, \|a\|_{L^\infty(0,T;\mathbb{R}^{n\times n})}\|z_1-z_2\|_{\conts}
\end{align*}for all $z_1,z_2 \in \conts.$
Thus, $F:\conts \to \conts$ is a contraction  provided that $\frac{(t^*)^\gamma}{\gamma \Gamma (\gamma)}  L\,\|a\|_{L^\infty(0,T;\mathbb{R}^{n\times n})} <1$. If $\frac{T^\gamma}{\gamma \Gamma (\gamma)}  L\, \|a\|_{L^\infty(0,T;\mathbb{R}^{n\times n})} <1$, the proof is complete. Otherwise we fix $t^*$ as above and conclude that $z=F(z)$ admits  a unique solution in $\conts$, which for later purposes, is denoted by $\widetilde y$. To prove that this solution can be extended on the whole given interval $[0,T],$ we use a concatenation argument. We define
\begin{align*}
\widehat F: C([t^*,2t^*];\mathbb{R}^n)\ni z \mapsto  y_0+\int_{t^*}^\cdot \frac{(\cdot-s)^{\gamma-1}}{\Gamma(\gamma)} f(a(s)z(s)+\ell(s))\ds
\\+\int_{0}^{t^*} \frac{(\cdot-s)^{\gamma-1}}{\Gamma(\gamma)} f(a(s)\widetilde y(s)+\ell(s))\ds \in C([t^*,2t^*];\mathbb{R}^n).
\end{align*}
Using a simple coordinate transform, we can apply
again \eqref{b} with $\varrho=\infty$ on the interval $(t^*,2t^*)$, and we have
\begin{multline*}
 \|\widehat F(z_1)-\widehat F(z_2)\|_{C([t^*,2t^*];\mathbb{R}^n)} \\
\begin{aligned}
& \le \sup_{t\in [t^*,2t^*]}
\int_{t^{*}}^{t} \frac{(t-s)^{\gamma-1}}{\Gamma(\gamma)}L\, \|a\|_{L^\infty(0,T;\mathbb{R}^{n\times n})}\|(z_1-z_2)(s)\|_{\mathbb{R}^n}\ds
\\&\leq \frac{(t^*)^\gamma}{\gamma \Gamma (\gamma)} L\, \|a\|_{L^\infty(0,T;\mathbb{R}^{n\times n})}\|z_1-z_2\|_{C([t^*,2t^*];\mathbb{R}^n)}
\end{aligned}
\end{multline*}
for all $z_1,z_2 \in C([t^*,2t^*];\mathbb{R}^n).$ Since $t^{*}$ was fixed so that $\frac{(t^*)^\gamma}{\gamma \Gamma (\gamma)}  L\,\|a\|_{L^\infty(0,T;\mathbb{R}^{n\times n})} <1$, we deduce that $z=\widehat F(z)$ admits  a unique solution $\widehat y$ in $C([t^*,2t^*];\mathbb{R}^n)$. By concatenating the local solutions found on the intervals $[0,t^*]$ and $[t^*,2t^*]$ one obtains a unique continuous function on $[0,2t^*]$ which satisfies the integral equation \eqref{eq:1}. Proceeding further  in the exact same way, one finds that \eqref{eq:1} has a unique solution in $\cont$.
\end{proof}
%
\begin{proposition}[$S$ is Locally Lipschitz]\label{prop:lip}
The solution operator associated to \eqref{eq:state} \[S: \lia  \ni (a, \ell) \mapsto y \in C([0,T];\mathbb{R}^n)\] is locally Lipschitz continuous in the following sense:
 For every $M>0$ there exists a constant $L_M > 0$ such that
 \begin{equation}\label{eq:loclipS}
  \|S(a_1, \ell_1) - S(a_2, \ell_2)\|_{C([0,T];\mathbb{R}^n)} \leq L_M (\| a_1-a_2\|_{L^{\infty}\otm}+\| \ell_1-\ell_2\|_{L^{\infty}\ot}),
 \end{equation}
 for all $(a_1, \ell_1), (a_2, \ell_2) \in {B_{\lia}(0,M)}$.
\end{proposition}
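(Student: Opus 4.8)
The plan is to exploit the mild formulation \eqref{eq:1} directly, subtract the two integral identities, and close the estimate with the generalized Gronwall inequality of Lemma \ref{lem:g}, which is tailored to the weakly singular kernel $(t-s)^{\gamma-1}$. Fix $M > 0$ and write $y_i := S(a_i,\ell_i)$ for $i = 1,2$, where $(a_i,\ell_i) \in B_{\lia}(0,M)$. Subtracting the two versions of \eqref{eq:1} gives
\[
y_1(t) - y_2(t) = \int_0^t \frac{(t-s)^{\gamma-1}}{\Gamma(\gamma)}\big[ f(a_1 y_1 + \ell_1) - f(a_2 y_2 + \ell_2)\big]\ds,
\]
and the global Lipschitz continuity of $f$ (Assumption \ref{assu:stand}\eqref{it:1}) bounds the bracket by $L\|a_1 y_1 + \ell_1 - a_2 y_2 - \ell_2\|$. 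The decisive algebraic step is to split $a_1 y_1 - a_2 y_2 = a_1(y_1 - y_2) + (a_1 - a_2)y_2$, so that the integrand is controlled by $L\big( \|a_1\|\,\|y_1 - y_2\| + \|a_1 - a_2\|\,\|y_2\| + \|\ell_1 - \ell_2\|\big)$. Since $\|a_1\|_{L^\infty} \le M$ on the $M$-ball, the first term feeds into a Gronwall loop; the remaining two terms are the source of the desired right-hand side, provided $\|y_2\|_{\cont}$ can be bounded uniformly in terms of $M$.

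Before closing, I would establish this a priori bound on the state. Inserting the linear growth estimate $\|f(z)\| \le \|f(0)\| + L\|z\|$ (again a consequence of Assumption \ref{assu:stand}\eqref{it:1}) into \eqref{eq:1} for $y_2$ and using $\|a_2\|_{L^\infty},\|\ell_2\|_{L^\infty}\le M$ yields
\[
\|y_2(t)\| \le c_1 + \frac{LM}{\Gamma(\gamma)}\int_0^t (t-s)^{\gamma-1}\|y_2(s)\|\ds,
\]
where $c_1 = \|y_0\| + (\|f(0)\| + LM)\,T^\gamma/(\gamma\Gamma(\gamma))$ absorbs the constant-in-$s$ contributions via Lemma \ref{lem:bound}. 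Lemma \ref{lem:g} applied with $\alpha = 1$, $\beta = \gamma$ then gives $\|y_2\|_{\cont} \le C_M$, with $C_M$ depending only on $M$, $L$, $\gamma$, $T$, $y_0$ and $f(0)$.

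With $C_M$ in hand, the difference $\varphi(t) := \|y_1(t) - y_2(t)\|$ satisfies
\[
\varphi(t) \le L\frac{T^\gamma}{\gamma\Gamma(\gamma)}\big(C_M\|a_1-a_2\|_{L^\infty} + \|\ell_1 - \ell_2\|_{L^\infty}\big) + \frac{LM}{\Gamma(\gamma)}\int_0^t (t-s)^{\gamma-1}\varphi(s)\ds,
\]
where the $\|a_1 - a_2\|$ and $\|\ell_1 - \ell_2\|$ contributions are integrated against the kernel and estimated by Lemma \ref{lem:bound} (or Lemma \ref{lem:int}). A final application of Lemma \ref{lem:g} (with $\alpha = 1$, $\beta = \gamma$) absorbs the integral term and bounds $\varphi$ by a constant multiple of the bracketed data term, which is exactly \eqref{eq:loclipS} with $L_M := C\,L\,T^\gamma(C_M + 1)/(\gamma\Gamma(\gamma))$.

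I expect the only genuine obstacle to be the singular kernel: ordinary Gronwall is unavailable because $(t-s)^{\gamma-1}$ blows up as $s \to t$, which is precisely why the generalized inequality of Lemma \ref{lem:g} must be invoked twice, once for the a priori state bound and once for the difference. Everything else is a routine splitting-and-estimate argument, and the uniformity of $L_M$ over the $M$-ball hinges entirely on the preliminary bound $\|y_2\|_{\cont} \le C_M$.
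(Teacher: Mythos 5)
Your proposal is correct and follows essentially the same route as the paper: a priori bound on the state via the singular Gronwall inequality (Lemma \ref{lem:g}), then subtraction of the two integral equations with the splitting $a_1y_1-a_2y_2=a_1(y_1-y_2)+(a_1-a_2)y_2$, and a second application of Lemma \ref{lem:g} to absorb the loop term. The constants and the order of the estimates match the paper's argument up to inessential rearrangements.
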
\begin{proof}
First we show that $S$ maps bounded sets to bounded sets. To this end, let $M>0$ and $(a, \ell) \in \lia$ be arbitrary but fixed such that  $\|(a,\ell)\|_{\lia}\leq M$. From \eqref{eq:1} we have
\begin{equation*}
\begin{aligned}
\| y(t)\| &\leq  \|y_0\|+\int_0^t \frac{(t-s)^{\gamma-1}}{\Gamma(\gamma)}
( \|f(0)\| + L\|a\|_{\li}\|y(s)\|+ \|\ell\|_{L^{\infty}(0,T;\mathbb{R}^n)} )\ds
\\& \le c_1+\int_0^t \frac{(t-s)^{\gamma-1}}{\Gamma(\gamma)} L\,M \|y(s)\|\ds
 \quad \forall\, t \in [0,T],
\end{aligned}
\end{equation*}
with
\[
c_1 = \|y_0\| + \frac{T^\gamma}\gamma (\|f(0)\| +\|\ell\|_{L^{\infty}(0,T;\mathbb{R}^n)} ),
\]
where  we used Assumption \ref{assu:stand}.\ref{it:1} and Lemma \ref{lem:int} with $r=1$. By means of Lemma \ref{lem:g} we deduce
\begin{equation}\label{eq:bound_y}
\| y\|_{C([0,T];\mathbb{R}^n)} \leq c(\|y_0\|, \|f(0)\|, L,M,\gamma,T)=:c_M.
\end{equation}
Now, let $M>0$  be further arbitrary but fixed. Define $y_k:=S(a_k,\ell_k),\ k=1,2$ and consider $\|(a_k,\ell_k)\|_{\lia}\leq M, \ k=1,2.$
Subtracting the  integral formulations associated to each $k$ and using the Lipschitz continuity of $f$ with constant $L$ yields
for $t \in [0,T]$
\begin{equation*}
\begin{aligned}
\| (y_1-y_2)(t)\| &\leq \int_0^t \frac{(t-s)^{\gamma-1}}{\Gamma(\gamma)} L\,\|a_1(s)y_1(s)-a_2(s) y_2(s)+\ell_1(s)-\ell_2(s)\|\ds
\\ \leq&  \int_0^t \frac{(t-s)^{\gamma-1}}{\Gamma(\gamma)}L\, \|a_1\|_{\li}\|y_1(s)- y_2(s)\|\ds
\\&\,  +
\int_0^t \frac{(t-s)^{\gamma-1}}{\Gamma(\gamma)} L\|y_2\|_{\liv}( \|a_1(s)-a_2(s)\|+\|\ell_1(s)-\ell_2(s)\|)\ds
\\
\leq &  \int_0^t \frac{(t-s)^{\gamma-1}}{\Gamma(\gamma)}L\,M\, \|y_1(s)- y_2(s)\|\ds \\
&
+ \frac{t^\gamma}\gamma  L\,(c_M \|a_1-a_2\|_{\li} + \|\ell_1-\ell_2\|_{\liv}),
\end{aligned}
\end{equation*}
where in the last inequality we used \eqref{eq:bound_y} and Lemma \ref{lem:int} with $r=1$. Now, Lemma \ref{lem:g} implies that
\begin{equation*}
\|y_1- y_2\|_{C([0,T];\mathbb{R}^n)} \lesssim c_M(\| a_1-a_2\|_{\li}+\| \ell_1-\ell_2\|_{\liv}),
\end{equation*}
which completes the proof.
\end{proof}
\begin{theorem}[$S$ is directionally differentiable]
\label{prop:dir}
The control to state operator \[S: \lia  \ni (a, \ell) \mapsto y \in C([0,T];\mathbb{R}^n)\] is directionally differentiable  with directional derivative given by the unique solution $\delta y \in C([0,T];\mathbb{R}^n)$ of the following integral equation
\begin{equation}\label{eq:2}
\delta y(t)=\int_0^t \frac{(t-s)^{\gamma-1}}{\Gamma(\gamma)} f'(a(s)y(s)+\ell(s);a(s)\delta y(s)+\delta a(s) y(s)+\delta \ell(s))\ds \quad \forall\, t \in [0,T],\end{equation} i.e., $\delta y=S'((a,\ell);(\delta a, \delta \ell))$ for all $(a,\ell), \dal \in \lia$.
\end{theorem}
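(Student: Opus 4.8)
The plan is to first establish unique solvability of the linearized equation \eqref{eq:2}, and then to show that the difference quotients of $S$ converge to its solution. Fix $\al,\dal \in \lia$ and set $y:=S\al$ and $z:=ay+\ell$. By \eqref{f_dir} the map $v\mapsto f'(z;v)$ is globally Lipschitz with constant $L$, and since $f'(z;0)=0$ it satisfies $\|f'(z;v)\|\le L\|v\|$. Hence the right-hand side of \eqref{eq:2}, read as the operator $\delta y\mapsto \io\big(f'(z;\,a\,\delta y+\da\,y+\dl)\big)$, is a contraction on $\conts$ whenever $(t^*)^\gamma L\|a\|_{\li}/(\gamma\Gamma(\gamma))<1$, by the very same computation (using \eqref{b} and \eqref{f_dir}) as in the proof of Proposition \ref{prop:existence}. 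The identical concatenation argument then yields a unique $\delta y\in\cont$ solving \eqref{eq:2}, together with an a priori bound from Lemma \ref{lem:g}.

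For $\tau>0$ small set $y_\tau:=S(\al+\tau\dal)$ and $w_\tau:=(y_\tau-y)/\tau$. Inserting $y_\tau=y+\tau w_\tau$ into \eqref{eq:1}, the argument of $f$ in the equation for $y_\tau$ equals $z+\tau d_\tau$ with $d_\tau:=a\,w_\tau+\da\,y_\tau+\dl$, whereas the direction in \eqref{eq:2} is $d:=a\,\delta y+\da\,y+\dl$. Subtracting the integral equations \eqref{eq:1} for $y_\tau$ and $y$, dividing by $\tau$, and subtracting \eqref{eq:2} gives
\[
w_\tau(t)-\delta y(t)=\int_0^t \frac{(t-s)^{\gamma-1}}{\Gamma(\gamma)}\Big[\tfrac1\tau\big(f(z+\tau d_\tau)-f(z)\big)-f'(z;d)\Big]\ds .
\]
I would then split the bracket as $A_\tau+R_\tau$, where $A_\tau:=\tfrac1\tau\big(f(z+\tau d_\tau)-f(z+\tau d)\big)$ and $R_\tau:=\tfrac1\tau\big(f(z+\tau d)-f(z)\big)-f'(z;d)$. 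By the global Lipschitz continuity of $f$ (Assumption \ref{assu:stand}.\ref{it:1}) and $d_\tau-d=a(w_\tau-\delta y)+\da(y_\tau-y)$,
\[
\|A_\tau(s)\|\le L\|d_\tau(s)-d(s)\|\le L\|a\|_{\li}\,\|w_\tau(s)-\delta y(s)\|+L\|\da\|_{\li}\,\|y_\tau(s)-y(s)\|,
\]
where the first summand is to be absorbed by Gronwall and the second tends to zero since $\|y_\tau-y\|_{\cont}\lesssim\tau$ by the local Lipschitz estimate \eqref{eq:loclipS}. Moreover $R_\tau(s)\to0$ for a.e.\ $s$ by the pointwise directional differentiability of $f$ (Assumption \ref{assu:stand}.\ref{it:2}), and $\|R_\tau(s)\|\le 2L\|d(s)\|$ uniformly in $\tau$.

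The main obstacle is to upgrade the pointwise convergence $R_\tau\to0$ to uniform-in-$t$ smallness of its singularly weighted integral (and to cope with the $\tau$-dependent direction $d_\tau$, which the splitting above already neutralizes). To this end I would apply Hölder's inequality with an exponent $r\in\,]1,1/(1-\gamma)[$, so that
\[
\sup_{t\in[0,T]}\int_0^t \frac{(t-s)^{\gamma-1}}{\Gamma(\gamma)}\|R_\tau(s)\|\ds\le \frac{1}{\Gamma(\gamma)}\Big(\sup_{t\in[0,T]}\int_0^t (t-s)^{(\gamma-1)r}\ds\Big)^{1/r}\|R_\tau\|_{L^{r'}(0,T)},
\]
where the kernel factor is bounded uniformly in $t$ by Lemma \ref{lem:int} and $\|R_\tau\|_{L^{r'}(0,T)}\to0$ by dominated convergence (using the domination $\|R_\tau\|\le 2L\|d\|_{\liv}$). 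Collecting all terms, $\phi_\tau(t):=\|w_\tau(t)-\delta y(t)\|$ satisfies
\[
\phi_\tau(t)\le L\|a\|_{\li}\int_0^t \frac{(t-s)^{\gamma-1}}{\Gamma(\gamma)}\phi_\tau(s)\ds+c_\tau,\qquad c_\tau\to0,
\]
and Gronwall's inequality (Lemma \ref{lem:g} with $\alpha=1$, $\beta=\gamma$) gives $\phi_\tau(t)\le C\,c_\tau$, whence $w_\tau\to\delta y$ in $\cont$. This is exactly $\delta y=S'(\al;\dal)$, which completes the argument.
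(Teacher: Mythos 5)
Your proposal is correct and follows essentially the same route as the paper's proof: the same contraction/concatenation argument for unique solvability of \eqref{eq:2}, and the same splitting of the difference quotient into a Lipschitz-controlled term (absorbed via Gronwall after invoking the local Lipschitz continuity of $S$ from Proposition \ref{prop:lip}) plus a remainder $R_\tau$ (the paper's $B_\tau$) treated by H\"older with exponent $r<1/(1-\gamma)$ from Lemma \ref{lem:int} and dominated convergence.
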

\begin{proof}We first  show that \eqref{eq:2} is uniquely solvable. To this end, we argue as in the proof of Proposition \ref{prop:existence}. From Lemma \ref{lem:bound} we know that the operator
\[F: C([0,t^*];\mathbb{R}^n)\ni z \mapsto \io( f '(ay+\ell;az+\delta a y+\delta \ell)) \in \conts,\]
 is well defined since $f '(ay+\ell;az+\delta a y+\delta \ell) \in \liv$ for $z \in \liv$ (see \eqref{f_dir}). By employing the Lipschitz continuity of $ f '(ay+\ell;\cdot)$ with constant $L$, one obtains the exact same estimate as in the proof of Proposition \ref{prop:existence} and the remaining arguments stay the same.

Next we focus on proving that $\delta y$ is the directional derivative of $S$ at $(a,\ell)$ in direction $\dal$. For $\tau \in (0,1]$ we define $y^\tau:=S(a+\tau \delta a,\ell+\tau \delta \ell), \ (a^\tau,\ell^\tau):=(a+\tau \delta a,\ell+\tau \delta \ell)$.
From \eqref{eq:1} we have
\begin{multline*}
\Big(\frac{y^\tau- y}{\tau}-\delta y\Big)(t) \\
\begin{aligned}
&= \frac{1}{\tau}\,\int_0^t \frac{(t-s)^{\gamma-1}}{\Gamma(\gamma)} (f(a^\tau(s)y^\tau(s)+\ell^\tau(s))-f(a(s)y(s)+\ell(s)))\ds
\\&\quad -\int_0^t \frac{(t-s)^{\gamma-1}}{\Gamma(\gamma)} f '(\underbrace{a(s)y(s)+\ell(s)}_{=:h(s)};\underbrace{a(s)\delta y(s)+\delta a(s) y(s)+\delta \ell(s)}_{=:\delta h(s)})\ds
\\&=\int_0^t \frac{(t-s)^{\gamma-1}}{\Gamma(\gamma)}  \frac{1}{\tau}\,[f(a^\tau(s)y^\tau(s)+\ell^\tau(s))-f((h+\tau \delta h)(s))]\ds
\\&\quad +\int_0^t \frac{(t-s)^{\gamma-1}}{\Gamma(\gamma)} \Big( \underbrace{\frac{1}{\tau} [f((h+\tau \delta h)(s))-f(h(s))]-f '(h(s);\delta h(s))}_{=:B_\tau(s)} \Big)\ds
\end{aligned}
\end{multline*}
for all $t \in [0,T].$
Since $f$ is Lipschitz continuous with constant $L$ we get
\begin{equation}\label{eq:deltay}
\begin{aligned}
\Big\|\Big(\frac{y^\tau- y}{\tau}-\delta y\Big)(t)\Big\| & \leq  \int_0^t \frac{(t-s)^{\gamma-1}}{\Gamma(\gamma)} L\,\Big(\frac{1}{\tau}\,\|a^\tau(s)y^\tau(s)+\ell^\tau(s)-h(s)-\tau \delta h(s)\|\Big)\ds
\\&\quad +\|B_\tau\|_{L^q(0,t;\mathbb{R}^n)}
 \quad \forall\, t \in [0,T],
\end{aligned}
\end{equation}
where $q=r' <\infty$, with $r$  given by Lemma \ref{lem:int}. Note that, in view of the directional differentiability of $f$ combined with Lebesgue dominated convergence theorem it holds
\begin{equation}\label{btau}
B_\tau \to 0 \quad \text{ in }L^{q}(0,T) \quad \text{ as }\tau \searrow 0.\end{equation}
Now, let us take a closer look at the term
\begin{equation*}
\begin{aligned}&
\frac{1}{\tau}\,\|a^\tau(s)y^\tau(s)+\ell^\tau(s)-h(s)-\tau \delta h(s)\|
\\&= \frac{1}{\tau}\,\big\|(a+\tau \delta a)(s)y^\tau(s)-a(s)y(s)-\tau(a \delta y+\delta a y)(s)\big\|
\\&=\Big\| a(s)\Big(\frac{y^\tau- y}{\tau}-\delta y\Big)(s)+\delta a(s) (y^\tau(s)-y(s))\Big\|
\\&\leq \|a\|_{\li}\Big\|\Big(\frac{y^\tau- y}{\tau}-\delta y \Big)(s)\Big\|
\\&\qquad \quad +\underbrace{\tau L_M\, \|\delta a\|_{\li} (\|\delta a\|_{\li} +\|\delta \ell\|_{\liv})}_{=:b_{\tau}} \quad \forall\,s\in [0,T],\end{aligned}
\end{equation*}
where in the last  inequality we used the Lipschitz continuity of $S$, cf.\, Proposition \ref{prop:lip}, with $M:=\|a\|_{\li} +\|\delta a\|_{\li}+\| \ell\|_{\liv} +\|\delta \ell\|_{\liv}$.  Going back to \eqref{eq:deltay}, we see that
\begin{multline*}
\Big\|\Big(\frac{y^\tau- y}{\tau}-\delta y\Big)(t)\Big\|  \leq \|B_\tau\|_{L^q(0,T;\mathbb{R}^n)}+L\,b_\tau
\\ +L\,\|a\|_{L^{\infty}\otm}\int_0^t \frac{(t-s)^{\gamma-1}}{\Gamma(\gamma)} \Big\|\Big(\frac{y^\tau- y}{\tau}-\delta y\Big)(s)\Big\| \ds
 \quad \forall\, t \in [0,T],
\end{multline*}
where we relied again on Lemma \ref{lem:int} with $r=1.$
In light of \eqref{btau}, Lemma \ref{lem:g} finally implies
\begin{equation*}
\begin{aligned}
\Big\|\frac{y^\tau- y}{\tau}-\delta y\Big\|_{C([0,T];\mathbb{R}^n)}& \lesssim \|B_\tau\|_{L^q(0,T;\mathbb{R}^n)}+L\,b_\tau    \to 0 \qquad \text{as } \tau \searrow 0.
\end{aligned}
\end{equation*}
The proof is now complete.
\end{proof}

\begin{remark}
Note that in the case $\gamma=1$, one obtains by arguing exactly as in the proof of Theorem \ref{prop:dir} that
 \[S: \lia  \ni (a, \ell) \mapsto y \in W^{1,\infty}(0,T;\mathbb{R}^n)\] is directionally differentiable  with directional derivative given by the unique solution $\delta y \in W^{1,\infty}(0,T;\mathbb{R}^n)$ of the following ODE
\begin{equation*}\label{eq:2_1}
\delta y'(t)=f '(a(t)y(t)+\ell(t);a(t)\delta y(t)+\delta a(t) y(t)+\delta \ell(t)) \quad \text{f.a.a.\,} t \in (0,T),\quad \delta y(0)=0.\end{equation*}\end{remark}

 \section{Strong solutions}\label{sec:w}
Next we prove that the state equation
\begin{equation}\label{eq:caputo}
\partial^\gamma y(t)=f(a(t)y(t)+\ell(t)) \quad \text{a.e.\ in }(0,T),
\qquad y(0)=y_0 ,
\end{equation}
admits in fact strong solutions, i.e., solutions that possess a so-called Caputo derivative, see Definition \ref{def:caputo}.{
\begin{definition}\label{strong_sol}
We say that $y \in W^{1,1}(0,T;\mathbb{R}^n)$ is a strong solution to \eqref{eq:caputo} if
\[
I_{0+}^{1-\gamma} y'=f(ay+\ell) \quad \text{a.e.\ in }(0,T),
\qquad y(0)=y_0.\]\end{definition}}
The following well known result is a consequence of the identity $\io I_{0+}^{1-\gamma}y' =I_{0+}^{1}y'=y$, which is implied by the semigroup property of the fractional integrals cf.\,e.g. \cite[Lem.\,2.3]{book} and  Definition \ref{def:io}.
\begin{lemma}
A function $y \in W^{1,1}(0,T;\mathbb{R}^n)$ is a strong solution of \eqref{eq:caputo} if and only if it satisfies the integral formulation \eqref{eq:1}.
\end{lemma}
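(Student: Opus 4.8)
The plan is to reduce the equivalence entirely to the semigroup property of the Riemann--Liouville fractional integrals, $\io I_{0+}^{1-\gamma} = I_{0+}^{1-\gamma}\io = I_{0+}^1$ (cf.\ \cite[Lem.\,2.3]{book}), together with the fact that $I_{0+}^1\phi = \int_0^\cdot \phi(s)\ds$, since $\Gamma(1)=1$ and the kernel exponent $\gamma-1$ becomes $0$. Throughout I set $g:=f(ay+\ell)$. As $y\in W^{1,1}\ot$ is absolutely continuous, hence continuous and bounded on $[0,T]$, and since the controls $(a,\ell)\in\lia$ while $f$ is globally Lipschitz (Assumption \ref{assu:stand}.\ref{it:1}), the composition $g$ lies in $\liv$; in particular $\io g\in\cont$ with $(\io g)(0)=0$ by Lemma \ref{lem:bound}. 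This already shows that \eqref{eq:1} forces $y(0)=y_0$, so the initial condition is automatic in both directions.

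For the implication \emph{strong solution} $\Rightarrow$ \eqref{eq:1}, I would simply apply $\io$ to the defining identity $I_{0+}^{1-\gamma}y'=g$. By the semigroup property the left-hand side becomes $\io I_{0+}^{1-\gamma}y'=I_{0+}^1 y'=\int_0^\cdot y'(s)\ds = y-y(0)=y-y_0$, where the third equality is the fundamental theorem of calculus for the absolutely continuous $y$ and the last uses the initial condition from Definition \ref{strong_sol}. Hence $y=y_0+\io g$, which is exactly \eqref{eq:1}.

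For the converse, suppose $y\in W^{1,1}\ot$ satisfies \eqref{eq:1}, i.e.\ $y-y_0=\io g$. Using the fundamental theorem of calculus and the semigroup property once more gives $\io I_{0+}^{1-\gamma}y' = I_{0+}^1 y' = y-y_0=\io g$, so that $\io\big(I_{0+}^{1-\gamma}y'-g\big)=0$. The remaining task is to cancel the operator $\io$. I would do this without invoking injectivity as a black box: applying $I_{0+}^{1-\gamma}$ to the last identity, which maps $L^1\ot$ into itself (its convolution kernel being integrable), the semigroup property yields $I_{0+}^1\big(I_{0+}^{1-\gamma}y'-g\big)=0$. Thus $\int_0^t\big(I_{0+}^{1-\gamma}y'-g\big)(s)\ds=0$ for every $t\in[0,T]$, and since the integrand belongs to $L^1$ it must vanish almost everywhere. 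This is precisely $I_{0+}^{1-\gamma}y'=f(ay+\ell)$ a.e., i.e.\ $y$ is a strong solution in the sense of Definition \ref{strong_sol}.

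Most of the argument is bookkeeping once the semigroup identity is available; the only genuinely delicate point is the cancellation of $\io$ in the converse, because one cannot differentiate the fractional integral $\io g$ of a merely bounded function in a naive way. Recasting this step as the injectivity of $\io$ on $L^1\ot$ — itself a consequence of the semigroup property and the fundamental theorem of calculus, exactly as above — is what keeps the proof elementary.
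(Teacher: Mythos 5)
Your argument is correct and follows essentially the same route as the paper, which justifies the lemma solely by the semigroup identity $\io I_{0+}^{1-\gamma}y'=I_{0+}^{1}y'=y-y_0$ combined with the fundamental theorem of calculus for absolutely continuous functions. Your additional care in the converse direction --- cancelling $\io$ by applying $I_{0+}^{1-\gamma}$ and reducing to the vanishing of an $L^1$ primitive rather than invoking injectivity as a black box --- is a welcome elaboration of a step the paper leaves implicit, but it is not a different method.
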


\begin{theorem}\label{prop:w11}
Let {$\gamma \in (0,1)$ and $r\in [1,\frac{1}{1-\gamma})$} beg given. For each $(a,\ell) \in W^{1,\varrho} \otm \times W^{1,\varrho} \ot$, $\varrho>1$,  \eqref{eq:caputo} admits a unique strong solution $y \in W^{1,\zeta}(0,T;\mathbb{R}^n)$, where $\zeta=\min\{r,\varrho\}>1$.
 \end{theorem}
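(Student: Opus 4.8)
The plan is to reduce everything to the mild solution $y \in \cont$ furnished by Proposition \ref{prop:existence} and to upgrade its regularity to $W^{1,\zeta}$. Uniqueness is immediate: by the lemma above characterizing strong solutions through \eqref{eq:1}, any strong solution lies in $W^{1,1}$ and solves the integral equation \eqref{eq:1}, hence is a mild solution, and mild solutions are unique by Proposition \ref{prop:existence}. Thus only the existence of a strong solution with the stated integrability must be shown, and for this it suffices to prove that the (unique) mild solution $y$ belongs to $W^{1,\zeta}(0,T;\mathbb{R}^n)$ (here we take $r>1$, which is admissible since $1/(1-\gamma)>1$, so that $\zeta=\min\{r,\varrho\}>1$).

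The analytic core is a differentiation formula for the fractional integral: for $h \in W^{1,1}(0,T;\mathbb{R}^n)$ one obtains, via the semigroup property and Fubini, $\io(h)(t) = \frac{t^\gamma}{\Gamma(\gamma+1)}h(0) + I_{0+}^{\gamma+1}(h')(t)$, whence $\io(h)$ is absolutely continuous with
\[
(\io h)'(t) = \frac{t^{\gamma-1}}{\Gamma(\gamma)}\,h(0) + \io(h')(t) \qquad \text{for a.e.\ } t .
\]
The singular prefactor $t^{\gamma-1}/\Gamma(\gamma)$ lies in $L^\zeta(0,T)$ precisely because $\zeta \le r < 1/(1-\gamma)$ (cf.\ Lemma \ref{lem:int}), while $\io(h') \in L^\zeta$ by Lemma \ref{lem:bound}; this is exactly where the restriction on $r$ enters. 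Since $a,\ell \in W^{1,\varrho}\hookrightarrow C([0,T])$ and $\zeta \le \varrho$, for any $w\in W^{1,\zeta}$ the argument $aw+\ell$ again lies in $W^{1,\zeta}$, and composing with the Lipschitz map $f$ (Assumption \ref{assu:stand}) keeps it in $W^{1,\zeta}$ with $\|(f(aw+\ell))'\| \le L\,\|(aw+\ell)'\|$ a.e. Consequently $\mathcal{T}(w):=y_0+\io(f(aw+\ell))$ maps $\{w\in W^{1,\zeta}:w(0)=y_0\}$ into itself, with $\mathcal{T}(w)'$ given by the formula above.

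The main obstacle is the circularity: differentiating the nonlinear memory term $\io(f(ay+\ell))$ requires $f(ay+\ell)\in W^{1,1}$, i.e.\ $y\in W^{1,1}$, which is exactly the claim; one cannot differentiate the a priori only continuous mild solution directly. I break this through an iteration argument. On a short interval $[0,t^*]$ (with $t^*$ small so that $\frac{(t^*)^\gamma}{\gamma\Gamma(\gamma)}L\|a\|_{\li}$ and the companion coefficient arising from $\|a'\|_{L^\varrho}$ are both less than one, exactly as in Proposition \ref{prop:existence}) I run the Picard iteration $w_{k+1}=\mathcal{T}(w_k)$, $w_0\equiv y_0$. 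Each $w_k$ genuinely belongs to $W^{1,\zeta}(0,t^*)$, so the derivative formula is legitimate at every step, and because $w_k(0)=y_0$ the boundary value $f(a(0)y_0+\ell(0))$ is the same for all $k$. Estimating $\|w_{k+1}'\|_{L^\zeta}$ through the formula together with the embedding bound $\|w_k\|_{\conts}\le \|y_0\|+(t^*)^{1-1/\zeta}\|w_k'\|_{L^\zeta}$ yields a recursion $\|w_{k+1}'\|_{L^\zeta}\le A+\theta\,\|w_k'\|_{L^\zeta}$ with $\theta<1$, hence a uniform bound $\sup_k\|w_k'\|_{L^\zeta(0,t^*)}\le R$.

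Finally I pass to the limit. By Proposition \ref{prop:existence} the iterates converge in $\conts$ to the mild solution $y$; since $\zeta>1$, $W^{1,\zeta}(0,t^*)$ is reflexive and compactly embedded in $\conts$, so a subsequence converges weakly in $W^{1,\zeta}$, necessarily to $y$, and weak lower semicontinuity of the norm gives $y\in W^{1,\zeta}(0,t^*)$ with $\|y'\|_{L^\zeta}\le R$. To reach the full interval I concatenate: on $[t^*,2t^*]$ the contribution of $[0,t^*]$ to \eqref{eq:1} is $\int_0^{t^*}\frac{(\cdot-s)^{\gamma-1}}{\Gamma(\gamma)}f(ay+\ell)(s)\ds$, which is smooth in $t$ there since the kernel singularity is avoided, hence of class $W^{1,\zeta}$, so the same argument applies with initial datum $y(t^*)$. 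Iterating over finitely many such subintervals yields $y\in W^{1,\zeta}(0,T;\mathbb{R}^n)$, which by the equivalence lemma is the sought strong solution with $\zeta=\min\{r,\varrho\}>1$, completing the proof.
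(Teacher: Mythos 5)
Your proof is correct, but it takes a genuinely different route from the paper's. The paper never differentiates anything: it works with difference quotients $h^{-1}(y(t+h)-y(t))$ (after extending the mild solution to $[0,T+1]$), bounds them via the integral equation and the Lipschitz continuity of $f$ by a majorant $B_h$ that is uniformly bounded in $L^\zeta$, closes the estimate with the generalized Gronwall inequality for weakly singular kernels (Henry's lemma, summed into a Mittag--Leffler function), and concludes $y\in W^{1,\zeta}$ from the difference-quotient characterization of Sobolev functions. You instead differentiate exactly: the identity $(\io h)'=\tfrac{t^{\gamma-1}}{\Gamma(\gamma)}h(0)+\io(h')$ for $h\in W^{1,1}$, combined with the chain rule for Lipschitz compositions, turns the fixed-point map into a self-map of $W^{1,\zeta}$, and you break the evident circularity by running the Picard iteration inside $W^{1,\zeta}$ with a uniform derivative bound and then extracting a weak limit. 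Both arguments hinge on the same two facts -- $t^{\gamma-1}\in L^\zeta$ because $\zeta\le r<1/(1-\gamma)$, and $\io$ bounded on $L^\zeta$ -- but your version has the advantage of exhibiting the source of the singular term explicitly as $f(a(0)y_0+\ell(0))\,t^{\gamma-1}/\Gamma(\gamma)$, which dovetails with the compatibility condition of Remark \ref{rem:bel}, at the price of invoking the a.e.\ chain rule for Lipschitz maps and a localization/concatenation that the paper's global Gronwall argument avoids. One small imprecision: on $[t^*,2t^*]$ the memory term $\int_0^{t^*}\tfrac{(\cdot-s)^{\gamma-1}}{\Gamma(\gamma)}f(ay+\ell)(s)\ds$ is not smooth up to $t=t^*$ -- its derivative behaves like $(t-t^*)^{\gamma-1}$ near that endpoint -- but it does lie in $W^{1,\zeta}(t^*,2t^*)$ for exactly the same reason as before, so your conclusion stands.
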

 \begin{proof}
  Let $t \in [0,T]$ and $h\in (0,1]$ be arbitrary but fixed. Note that the existence of a unique solution $y \in C([0,T+1];\mathbb{R}^n)$  is guaranteed by Proposition \ref{prop:existence}; this solution coincides with the mild solution of \eqref{eq:state} on the interval $[0,T]$.  From \eqref{eq:1} we have
 \[   y(t+h)-y(t)=\int_0^{t+h} \frac{s^{\gamma-1}}{\Gamma(\gamma)} f(ay+\ell)(t+h-s)\ds
-\int_0^{t} \frac{s^{\gamma-1}}{\Gamma(\gamma)} f(ay+\ell)(t-s)\ds,
 \]
 which implies
 \begin{equation*}
\begin{aligned}
\|y(t+h)-y(t)\| & \leq \int_t^{t+h} \frac{s^{\gamma-1}}{\Gamma(\gamma)} f(ay+\ell)(t+h-s)\ds
\\&+\int_0^{t} \frac{s^{\gamma-1}}{\Gamma(\gamma)}L\, \|(ay+\ell)(t+h-s)-(ay+\ell)(t-s)\|\ds
\\&:=z_1(t,h)+z_2(t,h).
\end{aligned}
\end{equation*}
For $z_1$ and $z_2$ we find the following estimates
\begin{equation*}
\begin{aligned}
\frac{z_1(t,h)}{h} \lesssim \frac{1}{h}\int_t^{t+h} s^{\gamma-1}\ds \leq t^{\gamma-1},
\end{aligned}
\end{equation*}\begin{equation*}
\begin{aligned}
z_2(t,h) &\lesssim \int_0^{t} \frac{s^{\gamma-1}}{\Gamma(\gamma)} \|(a(t+h-s)-a(t-s))y(t+h-s)\|\ds
\\&\qquad +\int_0^t \frac{s^{\gamma-1}}{\Gamma(\gamma)} \|a(t-s)(y(t+h-s)-y(t-s))\|\ds
\\&\qquad +\int_0^{t} \frac{s^{\gamma-1}}{\Gamma(\gamma)}\|\ell(t+h-s)-\ell(t-s)\|\ds
\\&\lesssim \int_0^{t} \frac{s^{\gamma-1}}{\Gamma(\gamma)} \|(a(t+h-s)-a(t-s))\|\ds \\&\qquad +
\int_0^{t} \frac{s^{\gamma-1}}{\Gamma(\gamma)} \|y(t+h-s)-y(t-s)\|\ds
\\&\qquad +\int_0^{t} \frac{s^{\gamma-1}}{\Gamma(\gamma)}\|\ell(t+h-s)-\ell(t-s)\|\ds,
\end{aligned}
\end{equation*}where we relied on the fact that $y,a,$ and $\ell$ are essentially bounded.
Altogether we have
\begin{equation}\label{eq:w2}
\begin{aligned}
\Big\|\frac{y(t+h)-y(t)}{h}\Big\| & \lesssim t^{\gamma-1}+\int_0^t (t-s)^{\gamma-1} \Big\|\frac{\ell(s+h)-\ell(s)}{h}\Big\| \ds
\\&\quad +\int_0^t (t-s)^{\gamma-1}\Big\|\frac{a(s+h)-a(s)}{h}\Big\|\ds
\\ \quad&+\int_0^t (t-s)^{\gamma-1} \Big\|\frac{y(s+h)-y(s)}{h}\Big\|\ds
\end{aligned}
\end{equation}
for all $t \in [0,T]$ and all $h \in (0,1]$.
Let us define
\[
 B_h(t):=t^{\gamma-1}+\int_0^t (t-s)^{\gamma-1} \Big\|\frac{\ell(s+h)-\ell(s)}{h}\Big\| \ds +\int_0^t (t-s)^{\gamma-1}\Big\|\frac{a(s+h)-a(s)}{h}\Big\|\ds.
\]
Since $(a,\ell) \in W^{1,\varrho} \otm \times W^{1,\varrho} \ot$, and $\zeta=\min\{r,\varrho\}$, where $r$ is given by Lemma \ref{lem:int}, we can  estimate $B_h$ as follows
\begin{equation}\label{b_h}
\begin{aligned}
\|B_{h}\|_{L^\zeta(0,T;\mathbb{R})}
& \le \frac{T^{(\gamma-1)r+1}}{(\gamma-1)r+1}+ \frac{T^\gamma}{\gamma \Gamma (\gamma)} \Big\|\frac{\ell(\cdot+h)-\ell(\cdot)}{h}\Big\|_{L^\zeta(0,T;{\mathbb{R}^n})}
\\
&\qquad
+\frac{T^\gamma}{\gamma \Gamma (\gamma)}\Big\|\frac{a(\cdot+h)-a(\cdot)}{h}\Big\|_{L^\zeta(0,T; {\mathbb{R}^{n\times n}})}
\\
&\le  \frac{T^{(\gamma-1)r+1}}{(\gamma-1)r+1}+ \frac{T^{\gamma+\zeta^{-1}-\varrho^{-1}}}{\gamma \Gamma (\gamma)} (\|\ell'\|_{L^\varrho(0,T; {\mathbb{R}^n})}+\|a'\|_{L^\varrho(0,T; {\mathbb{R}^{n\times n}})}),
\end{aligned}
\end{equation}
where we relied on Lemmas \ref{lem:int}, \ref{lem:bound} and  \cite[Thm.\,3, p.\,277]{evans}.
Hence, $\{B_h\}$ is uniformly bounded in $L^\zeta(0,T;\mathbb{R})$ with respect to $h\in (0,1]$.
Further, the generalized Gronwall inequality of \cite[Lemma 7.1.1]{henry},
see also \cite[Corollary 1]{YeGaoDing2007}, applied
to \eqref{eq:w2} yields
\begin{equation*}\label{gronw}
\Big\|\frac{y(t+h)-y(t)}{h}\Big\| \lesssim B_h(t)
+\int_0^t \Big[\sum_{n=0}^\infty \frac{\Gamma(\gamma)^n}{\Gamma(n\gamma)}(t-s)^{n\gamma-1}B_h(s) \Big] \ds
\end{equation*}
for all $t \in [0,T]$ and all $h \in (0,1]$.
Using monotone convergence theorem, we can exchange the order of integration and summation to get
\[
\Big\|\frac{y(t+h)-y(t)}{h}\Big\| \lesssim  B_h(t)
+\sum_{n=0}^\infty \Gamma(\gamma)^n (I_{0+}^{n\gamma}B_h)(t),
\]
where we used the definition of $I_{0+}^{n\gamma}$ from Definition \ref{def:io}.
Applying Lemma \ref{lem:bound} we obtain
\begin{equation*}
\begin{aligned}
\Big\|\frac{y(\cdot+h)-y(\cdot)}{h}\Big\|_{L^\zeta(0,T;{\mathbb{R}^{n})}}&\lesssim \|B_{h}\|_{L^\zeta(0,T;\mathbb{R})}+\sum_{n=0}^\infty {\Gamma(\gamma)^n}
\|I_{0+}^{n\gamma}B_h\|_{L^\zeta(0,T;\mathbb{R})}
\\&\leq \|B_{h}\|_{L^\zeta(0,T;\mathbb{R})} + \sum_{n=0}^\infty {\Gamma(\gamma)^n}
\frac{T^{n\gamma}}{n\gamma \Gamma(n\gamma)}\|B_h\|_{L^\zeta(0,T;\mathbb{R})}
\\&=\|B_{h}\|_{L^\zeta(0,T;\mathbb{R})}[1+E_{\gamma,1}(\Gamma(\gamma)T^\gamma)] \quad \forall\,h\in(0,1],
\end{aligned}
\end{equation*}where $E_{\gamma,1}(z)= \sum_{n=0}^\infty
\frac{z^{n}}{\Gamma(n\gamma+1)}<\infty$
is the celebrated Mittag-Leffler function; note that here we used $n\gamma \Gamma(n\gamma)= \Gamma(n\gamma+1)$.
Since $\{B_h\}$ is uniformly bounded in $L^\zeta(0,T;\mathbb{R})$, see \eqref{b_h},
we obtain that the difference quotients of $y$ are uniformly bounded in $L^\zeta(0,T;\mathbb{R})$ with respect to $h\in(0,1]$.
Hence, $y$
has a weak derivative in $L^\zeta(0,T;\mathbb{R})$ by \cite[Thm.\,3, p.\,277]{evans}. The proof is now complete.
%
%
\end{proof}

\begin{remark}
We remark that the degree of smoothness of  the right-hand sides $a,\ell$ does not necessarily  carry over to  the strong solution $y$ (unless a certain compatibility condition is satisfied, see Remark \ref{rem:bel} below). This is  in accordance with observations made in literature, see e.g., \cite[Ex.\,6.4, Rem\,6.13, Thm\,6.27]{diethelm} (fractional ODEs) and \cite[Cor.\,2.2]{stynes} (fractional in time PDEs). Indeed, for large values for $\varrho$ and small values of $\gamma$ tending to $0$, the strong solution $y \in W^{1,\zeta}(0,T;\mathbb{R}^n)$, where $\zeta=r \in (1,1/(1-\gamma))$ is close to $1$. However, as $\gamma$ approaches the value $1$, one can expect  the strong solutions to become as regular as their right-hand sides.
This can be seen in the case $\gamma=1$, where the smoothness of the strong solution improves as the smoothness of $a,\ell$ does so. Note that in this particular situation the solution of \eqref{eq:caputo} is in fact far more regular than as in the statement in  Theorem \ref{prop:w11}, see Remark \ref{rem:1}.\end{remark}
\begin{remark}[Compatibility condition]\label{rem:bel}
If $f(a(0)y_0+\ell(0))=0$, then the regularity of the strong solution to \eqref{eq:caputo}
 can be improved  by looking at the equation satisfied by the weak derivative $y'$ and inspecting its smoothness. Since the focus of this paper lies on the optimal control and not on the analysis of fractional equations, we do not give a proof here. We just remark that the requirement $f(a(0)y_0+\ell(0))=0$ corresponds to the one in e.g.\,\cite[Thm\,6.26]{diethelm}, cf., also  \cite[Cor.\,2.2]{stynes}, where it is proven that the smoothness of the derivative of the strong solution improves if and only if such a compatibility condition is true.
\end{remark}

\section{Strong Stationarity}\label{sec:s}
The first result in this section will be an essential tool for establishing the strong stationarity in Theorem \ref{thm:st} below, as it guarantees the existence of a multiplier satisfying both a gradient equation and an inequality associated to a local minimizer of \eqref{eq:optt}.
\begin{lemma}\label{lem:tool}
Let  $(\bar a,\bar \ell) $ be a given local optimum of \eqref{eq:optt}. Then there exists a multiplier $\lambda \in L^r(0,T;\mathbb{R}^{n })$ with $r$ as in Lemma \ref{lem:int} such that
\begin{subequations}\label{eq:strongstat_q}
 \begin{gather}
 (\bar a,\delta a)_{H^1(0,T;\mathbb{R}^{n \times n})}+ {\dual{\lambda}{ \delta a \,\bar y}_{L^r(0,T;\mathbb{R}^{n})}}=0 \quad \forall\,\delta a \in {H^1(0,T;\mathbb{R}^{n \times n})},\label{eq:grad_qq}
\\[1mm]  (\bar \ell, \delta \ell)_{H^1(0,T;\mathbb{R}^{n})}+ {\dual{\lambda}{ \delta \ell}_{L^r(0,T;\mathbb{R}^{n })}} \geq 0 \quad \forall\, \delta \ell \in \cone (\KK-\bar \ell),\label{eq:grad_q}
 \end{gather}
 \end{subequations}where we abbreviate  $\bar y:=S(\bar a, \bar \ell).$
 \end{lemma}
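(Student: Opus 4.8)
The plan is to follow the classical smoothening route announced just before the statement. I would replace the non-smooth non-linearity $f$ by a family of continuously differentiable, globally Lipschitz approximations $f_\eps$ (obtained by mollification) with $\|f_\eps'\|_{L^\infty}\le L$ uniformly in $\eps$ and $f_\eps\to f$ locally uniformly, and then study for each $\eps>0$ the \emph{smooth} localized auxiliary problem
\[
\min_{(a,\ell)}\; g(y_\eps(T))+\tfrac12\|a\|^2_{H^1}+\tfrac12\|\ell\|^2_{H^1}+\tfrac12\|(a,\ell)-\alb\|^2_{H^1\times H^1}\quad\text{s.t. }\partial^\gamma y_\eps=f_\eps(a y_\eps+\ell),\ y_\eps(0)=y_0,\ \ell\in\KK,
\]
restricted to a small $H^1\times H^1$-ball around $\alb$ on which $\alb$ is globally optimal for \eqref{eq:optt}. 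The Tikhonov term $\tfrac12\|\cdot-\alb\|^2$ is what pins the regularized minimizers to $\alb$. Since $f_\eps$ is smooth, the associated control-to-state map $S_\eps$ is Fréchet differentiable (the proof of Theorem \ref{prop:dir} applies verbatim with $f'(\cdot;\cdot)$ replaced by the linear map $f_\eps'(\cdot)\,\cdot$), so standard adjoint calculus becomes available.

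First I would establish existence of a minimizer $\ale$ by the direct method, using weak $H^1$-compactness and the compact embedding $H^1\embed\embed C$ to pass to the limit in $g(y_\eps(T))$ via Proposition \ref{prop:lip}. Then I would show $\ale\to\alb$ \emph{strongly} in $H^1\times H^1$ as $\eps\searrow0$: testing optimality of $\ale$ against $\alb$ and using $f_\eps\to f$ gives $\limsup_\eps[\,j_\eps(\ale)+\tfrac12\|\ale-\alb\|^2\,]\le j(\alb)$, while weak lower semicontinuity and the local optimality of $\alb$ give $j(\alb)\le j(a^*,\ell^*)\le\liminf_\eps j_\eps(\ale)$ for any weak limit $(a^*,\ell^*)$; combining the two collapses the localization term, forcing $(a^*,\ell^*)=\alb$ and strong convergence, and in particular $y_\eps\to\bar y$ in $C$.

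Next I would write the first-order system for the smooth problem. Differentiating the reduced objective, the endpoint term $g'(y_\eps(T))^\top\delta y_\eps(T)$ (with $\delta y_\eps=S_\eps'(\ale)\dal$) must be rewritten via a fractional adjoint state, using the integration-by-parts formula and the identity $\io^*=\itt$ (Lemma \ref{fip}, Remark \ref{rem}). Setting $D_\eps:=f_\eps'(a_\eps y_\eps+\ell_\eps)$ (a bounded diagonal matrix with $\|D_\eps\|\le L$), the adjoint $\tilde p_\eps$ solves the backward singular Volterra equation
\[
\tilde p_\eps(t)=\frac{(T-t)^{\gamma-1}}{\Gamma(\gamma)}\,g'(y_\eps(T))+\itt\big(a_\eps^\top D_\eps\,\tilde p_\eps\big)(t),
\]
and with $\lambda_\eps:=D_\eps\tilde p_\eps$ one obtains $g'(y_\eps(T))^\top\delta y_\eps(T)=\dual{\lambda_\eps}{\delta a\,y_\eps}_{L^r}+\dual{\lambda_\eps}{\delta\ell}_{L^r}$. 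The stationarity conditions for the auxiliary problem then read $(2a_\eps-\bar a,\delta a)_{H^1}+\dual{\lambda_\eps}{\delta a\,y_\eps}_{L^r}=0$ for all $\delta a$ (equality, as $a$ is unconstrained) and $(2\ell_\eps-\bar\ell,\delta\ell)_{H^1}+\dual{\lambda_\eps}{\delta\ell}_{L^r}\ge0$ for all $\delta\ell\in\cone(\KK-\ell_\eps)$.

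Finally I would obtain uniform $L^r$-bounds on $\lambda_\eps$ and pass to the limit. The source $\tfrac{(T-\cdot)^{\gamma-1}}{\Gamma(\gamma)}g'(y_\eps(T))$ is bounded in $L^r$ precisely because $r<1/(1-\gamma)$ (Lemma \ref{lem:int}), uniformly in $\eps$ since $y_\eps(T)$ stays bounded; applying Lemma \ref{lem:bound} to $\itt$ together with a generalized Gronwall estimate (Lemma \ref{lem:g} after time reversal, or the Mittag-Leffler bound used in the proof of Theorem \ref{prop:w11}, exploiting $\|a_\eps\|_{L^\infty}\lesssim\|a_\eps\|_{H^1}\le$ const) yields $\|\lambda_\eps\|_{L^r}\le C$. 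I would then extract a weak limit $\lambda_\eps\rightharpoonup\lambda$ in $L^r$ and use $\ale\to\alb$ in $H^1$ and $y_\eps\to\bar y$ in $C$ (so $\delta a\,y_\eps\to\delta a\,\bar y$ in $L^{r'}$) to pass to the limit in the two $\eps$-conditions, giving \eqref{eq:grad_qq}; choosing in the second condition the admissible directions $\delta\ell=\ell-\ell_\eps$ with $\ell\in\KK$ absorbs the $\eps$-dependence of the cone and yields \eqref{eq:grad_q}. Note that only the \emph{existence} of $\lambda$ is asserted, so weak convergence of $\lambda_\eps$ suffices and no limit of $D_\eps$ need be identified. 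I expect the main obstacle to be this last step: securing the \emph{uniform} $L^r$-bound on $\tilde p_\eps$ despite the non-integrable-looking singularity of the kernel of $\itt$, which is exactly where the restriction $r<1/(1-\gamma)$ and the generalized Gronwall/Mittag-Leffler estimate are indispensable; a secondary technical point is ensuring the ball constraint on $a$ is inactive in the limit so that \eqref{eq:grad_qq} holds as an equality.
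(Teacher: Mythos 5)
Your proposal follows essentially the same route as the paper's own proof in Appendix A: mollification of $f$, a localized auxiliary problem penalized by $\tfrac12\|(a,\ell)-(\bar a,\bar\ell)\|^2_{H^1}$, strong $H^1$-convergence of the regularized minimizers via the limsup/liminf sandwich, the backward singular Volterra adjoint equation with $\lambda_\eps=f_\eps'(a_\eps y_\eps+\ell_\eps)p_\eps$, uniform $L^r$-bounds via time reversal and Gronwall (exploiting $r<1/(1-\gamma)$), and weak-$L^r$ passage to the limit. The two points you flag as delicate (the uniform bound on the adjoint despite the singular kernel, and the ball constraint becoming inactive once $(a_\eps,\ell_\eps)\to(\bar a,\bar\ell)$ strongly) are exactly the ones the paper addresses, and in the same way.
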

 \begin{proof}
 The technical proof can be found in Appendix \ref{sec:a}.
 \end{proof}

The next step towards the derivation of our strong stationary system is to write the first order necessary optimality conditions in primal form.

\begin{lemma}[B-stationarity]
If $(\bar a,\bar \ell) $ is locally optimal for \eqref{eq:optt}, then there holds
\begin{multline}\label{eq:nec}
 {\nabla g(\bar y(T))^\top}\ S'\left((\bar a,\bar \ell); (\delta a, \delta \ell) \right)(T)
 \\
 + (\bar a,\delta a )_{H^1(0,T;\mathbb{R}^{n \times n})}+ (\bar \ell,\delta \ell )_{H^1(0,T;\mathbb{R}^n)} \geq 0
 \\
 \forall\, (\delta a , \delta \ell) \in H^1(0,T;\mathbb{R}^{n \times n})\times \cone (\KK-\bar \ell),
\end{multline}
 where we abbreviate $\bar y:=S(\bar a,\bar \ell)$.\end{lemma}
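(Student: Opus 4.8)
The plan is to read \eqref{eq:nec} as the first-order necessary condition for the \emph{reduced} problem, obtained by eliminating the state via $S$, and to derive it by perturbing the local optimum along admissible directions and passing to the limit in the difference quotient. Accordingly, introduce the reduced objective
\[
j(a,\ell) := g\big(S(a,\ell)(T)\big) + \tfrac12\|a\|^2_{H^1(0,T;\mathbb{R}^{n\times n})} + \tfrac12\|\ell\|^2_{H^1(0,T;\mathbb{R}^n)},
\]
so that $(\bar a,\bar\ell)$ is a local minimizer of $j$ over $H^1(0,T;\mathbb{R}^{n\times n})\times\KK$. Since $H^1(0,T)\hookrightarrow L^\infty(0,T)$, every direction $(\delta a,\delta\ell)\in H^1(0,T;\mathbb{R}^{n\times n})\times H^1(0,T;\mathbb{R}^n)$ also lies in $\lia$, so the directional derivative $S'((\bar a,\bar\ell);(\delta a,\delta\ell))$ supplied by Theorem \ref{prop:dir} is available for all directions occurring in \eqref{eq:nec}.

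Next I would check feasibility of the perturbation. Fix $\delta a\in H^1(0,T;\mathbb{R}^{n\times n})$ arbitrary and $\delta\ell\in\cone(\KK-\bar\ell)$; using convexity of $\KK$ one may write $\delta\ell = s(k-\bar\ell)$ with $s\geq 0$, $k\in\KK$. Then for $\tau\in(0,1]$ with $\tau s\leq 1$,
\[
\bar\ell + \tau\delta\ell = (1-\tau s)\bar\ell + \tau s\,k\in\KK
\]
by convexity, while $\bar a + \tau\delta a$ is unconstrained; moreover $(\bar a + \tau\delta a,\bar\ell+\tau\delta\ell)\to(\bar a,\bar\ell)$ in $H^1\times H^1$ as $\tau\searrow0$. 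Hence for all sufficiently small $\tau>0$ the perturbed pair is feasible and lies in the prescribed neighborhood, so local optimality gives $j(\bar a + \tau\delta a,\bar\ell + \tau\delta\ell) - j(\bar a,\bar\ell)\geq 0$. The two quadratic terms are Fréchet differentiable on the Hilbert spaces $H^1$, contributing the difference-quotient limits $(\bar a,\delta a)_{H^1}$ and $(\bar\ell,\delta\ell)_{H^1}$, i.e.\ precisely the inner-product terms in \eqref{eq:nec}.

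The core step is the tracking term. Set $\delta y := S'((\bar a,\bar\ell);(\delta a,\delta\ell))$; by Theorem \ref{prop:dir},
\[
\Big\|\tfrac{1}{\tau}\big(S(\bar a+\tau\delta a,\bar\ell+\tau\delta\ell) - \bar y\big) - \delta y\Big\|_{C([0,T];\mathbb{R}^n)}\longrightarrow 0 \quad\text{as }\tau\searrow0,
\]
so that, abbreviating $u_\tau := S(\bar a+\tau\delta a,\bar\ell+\tau\delta\ell)(T)$, evaluation at $T$ yields $u_\tau = \bar y(T) + \tau\,\delta y(T) + o(\tau)$ in $\mathbb{R}^n$. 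Since $g$ is continuously, hence Fréchet, differentiable, expanding
\[
g(u_\tau) - g(\bar y(T)) = \nabla g(\bar y(T))^\top\big(\tau\,\delta y(T) + o(\tau)\big) + o\big(\|u_\tau-\bar y(T)\|\big),
\]
and noting $\|u_\tau-\bar y(T)\| = O(\tau)$, gives after division by $\tau$ the limit $\nabla g(\bar y(T))^\top\,\delta y(T)$. Combining this with the quadratic contributions shows that the one-sided directional derivative of $j$ at $(\bar a,\bar\ell)$ in direction $(\delta a,\delta\ell)$ equals the left-hand side of \eqref{eq:nec}, and the sign of the difference quotient forces it to be nonnegative.

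The main obstacle is exactly this chain-rule argument: $S$ is only \emph{directionally} differentiable and its directional derivative need not be linear in $(\delta a,\delta\ell)$, so one cannot simply compose Gâteaux derivatives as for smooth control-to-state maps. What makes the composition $g\circ S(\cdot)(T)$ nonetheless directionally differentiable is that the \emph{outer} map $g$ is Fréchet differentiable, which lets the remainder $o(\|u_\tau-\bar y(T)\|)$ be absorbed into $o(\tau)$; the continuity of the evaluation $y\mapsto y(T)$ on $C([0,T];\mathbb{R}^n)$ transfers the $C$-norm convergence of the difference quotients of $S$ to convergence at $t=T$. Assembling the three limits and using the established feasibility then completes the proof of \eqref{eq:nec}.
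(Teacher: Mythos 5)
Your proposal is correct and is exactly the argument the paper compresses into one line: it invokes the continuous differentiability of $g$, the directional differentiability of $S$ from Theorem \ref{prop:dir}, and local optimality along feasible rays $\bar\ell+\tau s(k-\bar\ell)\in\KK$. Your expanded treatment of the chain rule for the Fr\'echet-differentiable outer map $g$ composed with the merely directionally differentiable $S$, together with the continuity of evaluation at $t=T$, fills in the details faithfully and correctly.
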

 \begin{proof}
The result  follows from the continuous differentiability of $g$ combined with the directional differentiability of $S$, see Theorem \ref{prop:dir},  and the local optimality of $(\bar a,\bar \ell)$.
\end{proof}

\begin{assumption}[`Constraint Qualification']\label{assu:cq}
There exists some index $m \in \{1,...,n\}$
such that the optimal state satisfies  $\bar y_m(t) \neq 0$
for all $t\in [0,T]$.\end{assumption}

\begin{remark}\label{rem:cq}
Let us underline that there is a zoo of situations where the requirement in Assumption \ref{assu:cq} is fulfilled. We just enumerate a few in what follows.
\begin{itemize}
\item If there exists some index  $m \in \{1,...,n\}$ such that $y_{0,m}>0$ and $f(z) \geq 0 \quad \forall\,z \in \mathbb{R}$ then
 the optimal state satisfies  $\bar y_m(t)\geq y_{0,m} > 0$ for all $t\in [0,T]$, in view of \eqref{eq:1}. {In particular, our `constraint qualification' is  fulfilled by continuous fractional deep neural networks (DNNs) with ReLU activation function, since $f=\max\{0,\cdot\}$ in this  case, while an additional initial datum can be chosen so that $y_{0,m}>0$}. \item Similarly, if there exists some index  $m \in \{1,...,n\}$ such that $y_{0,m}<0$ and $f(z) \leq 0 \quad \forall\,z \in \mathbb{R},$ then
 the optimal state satisfies  $\bar y_m(t)\leq  y_{0,m} < 0$ for all $t\in [0,T]$. In both situations, the CQ in Assumption \ref{assu:cq} is satisfied.
\item  If there exists some index  $m \in \{1,...,n\}$ such that  $y_{0,m} \neq 0$ and $f(\ll _m(t))=0$ for all $t\in [0,T]$, then, according to \cite[Thm.\,6.14]{diethelm}  the optimal state satisfies  $\bar y_m(t) \neq 0$ for all $t\in [0,T]$. This is the case if e.g.\, $f=\max\{0,\cdot\}$ and $\KK \subset \{v \in H^{1}\ot: v_m(t) \leq 0\  \forall\, t \in [0,T]\}$.
\end{itemize}\end{remark}

\begin{remark}\label{rem:cq0}
We point out that  Assumption \ref{assu:cq}
is due to the structure of the state equation and due to the fact that constraints are imposed on the control $\ell$ (and not on the control $a$), see Remark \ref{rem:surj} below for more details. The claim concerning the optimal state in Assumption \ref{assu:cq} is essential for deriving the strong stationary optimality system \eqref{eq:strongstat} below and it   plays the role of a `constraint qualification' (CQ), cf.\ e.g.\ \cite[Ch.\ 6]{troe}.
This terminology has its roots in finite-dimensional non-linear optimization, where it describes a condition for the (unknown) local optimizer which guarantees the existence of Lagrange multipliers such that a KKT-system is satisfied, see e.g.\ \cite[Sec.\ 2]{gk}. In the non-smooth case, the KKT conditions correspond to the strong stationary optimality conditions, see Remark \ref{rem:kkt} below.\end{remark}

The following result describes the density of the set of arguments into which the non-smoothness is derived in the ``linearized'' state equation \eqref{eq:2}. This aspect is crucial in the context of proving strong stationarity for the control of non-smooth equations, cf.\,\cite{st_coup,paper} and Remark \ref{rem:surj} below.
\begin{lemma}[Density of the set of arguments of $f'((\bar a \bar y+\bar \ell)_i(t);\cdot)$]\label{lem:dense}
Let  $(\bar a,\bar \ell) $ be a given local optimum for \eqref{eq:optt} with associated state $\bar y:=S(\bar a, \ll)$. Under Assumption \ref{assu:cq}, it holds
\[\{\bar a S'((\bar a,\ll);(\delta a,0))+\delta a \bar y:\delta a \in H^1\otm\} \dense \cont . \]
\end{lemma}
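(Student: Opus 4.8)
The goal is to show that the set
\[
V:=\{\bar a\, S'((\bar a,\ll);(\delta a,0))+\delta a\, \bar y:\delta a \in H^1\otm\}
\]
is dense in $\cont$. Since $V$ is a linear subspace of $\cont$, I would use the standard dual characterization of density: it suffices to show that every $\mu \in C([0,T];\mathbb{R}^n)^*$ that annihilates $V$ must be zero. Thus the plan is to fix a measure $\mu$ (an $\mathbb{R}^n$-valued regular Borel measure, by Riesz representation) with $\langle \mu, v\rangle = 0$ for all $v \in V$, and deduce $\mu = 0$. The element $v=\bar a\, \delta y + \delta a\, \bar y$, where $\delta y := S'((\bar a,\ll);(\delta a,0))$ solves the linearized equation \eqref{eq:2} with $\delta\ell = 0$; concretely
\[
\delta y(t)=\int_0^t \frac{(t-s)^{\gamma-1}}{\Gamma(\gamma)}\, f'\big((\bar a\bar y+\ll)(s);\,\bar a(s)\delta y(s)+\delta a(s)\bar y(s)\big)\ds .
\]

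The key structural observation I would exploit is Assumption \ref{assu:cq}: there is an index $m$ with $\bar y_m(t)\neq 0$ for all $t\in[0,T]$, and since $\bar y_m$ is continuous on a compact interval it is bounded away from zero, so $1/\bar y_m$ is continuous. This lets me \emph{prescribe} the argument $\delta a\,\bar y$ componentwise in a controlled way. The cleanest route is to first establish that the \emph{simpler} set $\{\delta a\,\bar y:\delta a\in H^1\otm\}$ is itself dense in $\cont$, and then to absorb the extra term $\bar a\,\delta y$. For the first density: given any target $w\in\cont$, I want $\delta a\in H^1\otm$ with $\delta a\,\bar y\approx w$. Using the nonvanishing component $\bar y_m$, I can take $\delta a$ to be the matrix whose $j$-th row places the entry $w_j/\bar y_m$ in column $m$ and zeros elsewhere (suitably smoothed so that it lies in $H^1$, approximating $w$ and $1/\bar y_m$ by $H^1$ functions), giving $(\delta a\,\bar y)_j = (w_j/\bar y_m)\,\bar y_m = w_j$. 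This shows $\{\delta a\,\bar y:\delta a\in H^1\otm\}\dense\cont$.

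The main obstacle is the coupling term $\bar a\,\delta y$, since $\delta y$ is not freely prescribable — it is determined by $\delta a$ through the linear integral equation above. The plan is to regard the full map $\delta a \mapsto v = \bar a\,\delta y + \delta a\,\bar y$ as a perturbation of the map $\delta a\mapsto \delta a\,\bar y$ by a compact operator. Indeed, by Lemma \ref{lem:bound} the operator sending the argument $(\bar a\delta y+\delta a\bar y)$ into $\delta y\in\cont$ is smoothing (maps $L^\infty$ into $C([0,T];\mathbb{R}^n)$, hence is compact into $\cont$ via Arzel\`a--Ascoli on the fractional integral), and multiplication by the bounded matrix $\bar a$ preserves this. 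Consequently, on the annihilator side, if $\mu$ annihilates all $v=\bar a\delta y+\delta a\bar y$, I would test against the $\delta a$ constructed above and rewrite
\[
\langle\mu, \delta a\,\bar y\rangle = -\langle\mu, \bar a\,\delta y\rangle .
\]
Because $\delta a\,\bar y$ ranges over a dense subset while the right-hand side factors through the compact (smoothing) dependence of $\delta y$ on $\delta a$, the operator $\mu\mapsto$ (adjoint of the perturbation) is compact, and a Fredholm/Riesz argument forces the free part to dominate: density of $\{\delta a\,\bar y\}$ together with compactness of $\delta a\mapsto\bar a\,\delta y$ implies that the range of $\delta a\mapsto v$ is still dense (a dense range is preserved under compact perturbation provided one rules out a nontrivial kernel of the adjoint, which here follows from the volterra structure making the adjoint integral operator a quasinilpotent whose resolvent exists). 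Making this last step rigorous — controlling the adjoint integral equation for $\mu$ driven by $f'((\bar a\bar y+\ll)(s);\cdot)$ and showing it admits only the trivial annihilator — is the technical heart of the argument, and I would carry it out by deriving the adjoint Volterra equation satisfied by $\mu$, applying fractional integration by parts (Lemma \ref{fip}) to move $\io$ onto $\itt$, and using the Gronwall-type bound of Lemma \ref{lem:g} together with Assumption \ref{assu:cq} to conclude $\mu\equiv 0$.
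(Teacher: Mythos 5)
Your proposal has a genuine gap, and it occurs at the very first step. You write ``Since $V$ is a linear subspace of $\cont$, I would use the standard dual characterization of density.'' But $V$ is \emph{not} a linear subspace: the directional derivative $f'(z;\cdot)$ is only positively homogeneous and Lipschitz in the direction (Assumption \ref{assu:stand}.\ref{it:2} and \eqref{f_dir}), not additive — for the ReLU one has $f'(0;d)=\max\{0,d\}$. Consequently the map $\delta a\mapsto S'((\bar a,\ll);(\delta a,0))$ defined by \eqref{eq:2} is not linear, $V$ is at best a (non-convex) cone, and the Hahn--Banach annihilator criterion for density does not apply. For the same reason there is no ``adjoint Volterra equation satisfied by $\mu$'': the operator $\rho\mapsto f'(\bar a\bar y+\ll;\rho)$ has no adjoint, so the technical heart of your plan cannot be carried out. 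Even setting this aside, your Fredholm step is circular: dense range is \emph{not} preserved under compact perturbation (take $I-P$ with $P$ a finite-rank projection), and the hypothesis you would need to rule out — a nontrivial annihilator of the full map — is exactly the statement to be proved.

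The paper's proof sidesteps all of this by being constructive and, crucially, by inverting the order of construction. Given a target $\rho\in\cont$, one first \emph{defines} $\widehat{\delta y}:=\io\bigl(f'(\bar a\bar y+\ll;\rho)\bigr)\in\cont$ (Lemma \ref{lem:bound}), and only then solves the pointwise algebraic identity $\bar a\widehat{\delta y}+\widehat{\delta a}\,\bar y=\rho$ for $\widehat{\delta a}$, placing $(\rho-\bar a\widehat{\delta y})_j/\bar y_m$ in the $m$-th column (this is where Assumption \ref{assu:cq} enters, exactly as in your construction of the ``free part''). By construction the argument of $f'$ in the defining integral equals $\bar a\widehat{\delta y}+\widehat{\delta a}\,\bar y$, so $\widehat{\delta y}=S'((\bar a,\ll);(\widehat{\delta a},0))$ and $\rho$ is attained \emph{exactly} by a continuous $\widehat{\delta a}$; the passage from continuous to $H^1$ directions is then handled by approximating $\widehat{\delta a}$ in $\contm$ and using the continuity of $S'((\bar a,\ll);\cdot):\lia\to\cont$, which follows from Lipschitz continuity plus directional differentiability. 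No duality, compactness, or linearity is needed. You correctly identified the role of the nonvanishing component $\bar y_m$ and the smoothing effect of $\io$, but the overall architecture of your argument does not survive the non-smoothness of $f$.
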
\begin{proof}
Let $\rho \in \cont$ be  arbitrary, but fixed and define the  function
\[\widehat{\delta y}(t):=\int_0^t \frac{(t-s)^{\gamma-1}}{\Gamma(\gamma)} f '(\bar a(s)\bar y(s)+\bar \ell(s);\rho(s))\ds \quad \forall\, t \in [0,T].\]
Note that $\widehat{\delta y} \in C([0,T];\mathbb{R}^n),$ in view of Lemma \ref{lem:bound}.
{
We will now construct $\widehat{\delta a}$ such that
}
\begin{equation}\label{rho}
\bar a\widehat{\delta y}+\widehat{\delta a} \bar y=\rho.
\end{equation}
{
This is possible due to Assumption \ref{assu:cq}. Indeed,
for $j=1,...,n$ and $t\in [0,T]$ we can define
}
\[
\widehat{\delta a}_{jm}(t):=\frac{(\rho (t)- \bar a(t)\widehat{\delta y}(t))_j}{\bar y_m(t)}, \qquad \widehat{\delta a}_{ji}(t):=0, \quad \text{for }i \neq m.
\]
Note that $\widehat{\delta a}_{jm} \in C[0,T]$.
{
Due to \eqref{rho}, $\widehat{\delta y}$ satisfies the integral equation
}
\begin{equation*}\label{eq:22}
\widehat{\delta y}(t)=\int_0^t \frac{(t-s)^{\gamma-1}}{\Gamma(\gamma)} f '(\bar a(s)\bar y(s)+\bar \ell(s);\bar a(s)\widehat{\delta y}(s)+\widehat{\delta a}(s) \bar y(s))\ds \quad \forall\, t \in [0,T].\end{equation*}
By Theorem \ref{prop:dir}, the integral equation is equivalent to
\begin{equation}\label{hat}
\widehat{\delta y}=S'((\bar a,\bar \ell);(\widehat{\delta a},0)).
\end{equation}
 Now, let us consider a sequence $\delta a_k \in H^1(0,T;\mathbb{R}^{n \times n})$ with
\begin{equation}\label{eq:conv_an}
\delta a_k \to \widehat{\delta a} \quad \text{ in }\contm.
\end{equation}
In view of Proposition \ref{prop:lip} and Theorem \ref{prop:dir},
the mapping $S$ is locally Lipschitz continuous and directionally differentiable from $\lia$ to $\cont$.
Hence,
the mapping $S'((\bar a,\bar \ell);\cdot):\lia \to \cont$ is continuous, see, e.g., \cite[Lem.\,3.1.2 and Lem.\,3.1.3]{schirotzek}. Thus, the convergence \eqref{eq:conv_an} implies that
\begin{equation*}\label{eq:conv_dyn}
 {\delta y}_k:=S'((\bar a,\bar \ell);(\delta a_k,0)) \to \widehat{ \delta y} \quad \text{ in }C([0,T];\mathbb{R}^n),
\end{equation*}where we recall \eqref{hat}. This gives in turn
\[\bar  a {\delta y}_k+\delta a_k \bar  y \to \rho \quad \text{ in }\cont,\]in view of \eqref{rho}. Since $\rho \in \cont$ was arbitrary, the proof is now complete.
\end{proof}

The main finding of this paper is stated in the following result.
\begin{theorem}[Strong stationarity]\label{thm:st}
Suppose that Assumption \ref{assu:cq} is satisfied and let  $(\bar a,\bar \ell) $ be a given local optimum for \eqref{eq:optt} with associated state $\bar y:=S(\bar a, \ll)$. Then there exists a  multiplier $\lambda \in L^r(0,T;\mathbb{R}^{n })$ and an adjoint state $p \in L^r(0,T;\mathbb{R}^{n })$, where $r$ is given by Lemma \ref{lem:int}, such that
\begin{subequations}\label{eq:strongstat}
 \begin{gather}
  p(t)= \frac{(T-t)^{\gamma-1}}{\Gamma(\gamma)} \nabla g (\bar y(T))+\int_t^T \frac{(s-t)^{\gamma-1}}{\Gamma(\gamma)} \bar a^\top(s)\lambda (s)\ds \quad \forall\, t \in [0,T).\label{eq:p2}
\\
\lambda_i(t)\in  [p_i(t)f_-' ((\bar a \bar y+\bar \ell)_i(t)),p_i(t)f_+' ((\bar a \bar y+\bar \ell)_i(t))]\quad \text{\ for a.a.\ }t \in (0,T),\ i=1,...,n,\label{sign_c}
\\
(\bar a,\delta a)_{H^1(0,T;\mathbb{R}^{n \times n})}+{\dual{\lambda}{ \delta a \,\bar y}_{L^r(0,T;\mathbb{R}^{n})}}=0 \quad \forall\,\delta a \in {H^1(0,T;\mathbb{R}^{n \times n})},\label{eq:grad_1}
\\
(\bar \ell, \delta \ell)_{H^1(0,T;\mathbb{R}^{n})}+ {\dual{\lambda}{ \delta \ell}_{L^r(0,T;\mathbb{R}^{n })} } \geq 0 \quad \forall\, \delta \ell \in \cone (\KK-\bar \ell),\label{eq:grad_2}
 \end{gather}
 \end{subequations}where, for an arbitrary $z\in \mathbb{R}$, the left and right-sided derivative of $f: \mathbb{R} \to \mathbb{R}$ are defined through $f'_+(z) := f'(z;1)$ and $ f'_-(z) := - f'(z;-1)$, respectively. \end{theorem}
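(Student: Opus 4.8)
The plan is to regard the gradient relations \eqref{eq:grad_1} and \eqref{eq:grad_2} as already established, since they coincide verbatim with \eqref{eq:grad_qq} and \eqref{eq:grad_q} from Lemma \ref{lem:tool}, which furnishes the multiplier $\lambda \in L^r(0,T;\mathbb{R}^n)$. I would then simply \emph{define} the adjoint state $p$ as the right-hand side of \eqref{eq:p2}, i.e.\ $p := p_0 + \itt(\bar a^\top\lambda)$ with $p_0(t) := \frac{(T-t)^{\gamma-1}}{\Gamma(\gamma)}\nabla g(\bar y(T))$. The membership $p \in L^r(0,T;\mathbb{R}^n)$ is precisely where the range $r<1/(1-\gamma)$ is needed: $p_0 \in L^r$ because $\int_0^T (T-t)^{(\gamma-1)r}\dt<\infty$ (Lemma \ref{lem:int}), while $\itt(\bar a^\top\lambda)\in L^r$ by the boundedness of $\itt$ on $L^r$ (Lemma \ref{lem:bound}) together with $\bar a \in H^1 \hookrightarrow L^\infty$. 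Everything thus reduces to proving the sign condition \eqref{sign_c}.

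The core of the argument is a duality identity that eliminates the endpoint term $\nabla g(\bar y(T))^\top \delta y(T)$ from the B-stationarity inequality \eqref{eq:nec}. Writing $\bar h := \bar a\bar y + \bar\ell$, $\delta h := \bar a\,\delta y + \delta a\,\bar y + \delta\ell$ and $v := f'(\bar h;\delta h)$, the linearized equation \eqref{eq:2} reads $\delta y = \io(v)$, whence $\int_0^T p_0^\top v\dt = \nabla g(\bar y(T))^\top \io(v)(T) = \nabla g(\bar y(T))^\top\delta y(T)$. For the remaining part of $p$, fractional integration by parts (Lemma \ref{fip}, applicable with $\varrho=\infty$, $\zeta=r$ since $v\in C\hookrightarrow L^\infty$ and $\bar a^\top\lambda\in L^r$, so $1/\varrho+1/\zeta = 1/r \le 1+\gamma$) gives $\int_0^T (\itt(\bar a^\top\lambda))^\top v\dt = \int_0^T (\bar a^\top\lambda)^\top \io(v)\dt = \int_0^T(\bar a^\top\lambda)^\top\delta y\dt$. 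Combining these with the elementary expansion $\int_0^T\lambda^\top\delta h\dt = \int_0^T(\bar a^\top\lambda)^\top\delta y\dt + \dual{\lambda}{\delta a\,\bar y}_{L^r} + \dual{\lambda}{\delta\ell}_{L^r}$ yields
\begin{equation*}
\nabla g(\bar y(T))^\top \delta y(T) = \int_0^T \big( p^\top f'(\bar h;\delta h) - \lambda^\top\delta h \big)\dt + \dual{\lambda}{\delta a\,\bar y}_{L^r} + \dual{\lambda}{\delta\ell}_{L^r}.
\end{equation*}
Inserting this into \eqref{eq:nec} and cancelling the $\delta a$-terms by means of \eqref{eq:grad_qq} leaves
\begin{equation*}
\int_0^T \big( p^\top f'(\bar h;\delta h) - \lambda^\top\delta h \big)\dt + \dual{\lambda}{\delta\ell}_{L^r} + (\bar\ell,\delta\ell)_{H^1(0,T;\mathbb{R}^n)} \ge 0
\end{equation*}
for all $(\delta a,\delta\ell)\in H^1(0,T;\mathbb{R}^{n\times n})\times\cone(\KK-\bar\ell)$.

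Finally I would exploit the freedom in $\delta a$. Setting $\delta\ell=0$ annihilates the last two terms, and Lemma \ref{lem:dense} shows that, as $\delta a$ runs through $H^1$, the argument $\delta h = \bar a\,S'((\bar a,\bar\ell);(\delta a,0)) + \delta a\,\bar y$ runs through a dense subset of $\cont$. Because $\phi\mapsto \int_0^T (p^\top f'(\bar h;\phi) - \lambda^\top\phi)\dt$ is Lipschitz on $L^{r'}(0,T;\mathbb{R}^n)$ — using the pointwise bound \eqref{f_dir} on $f'(\bar h;\cdot)$, the fact that $f'(\bar h;0)=0$, and $p,\lambda\in L^r=(L^{r'})^*$ — the inequality $\int_0^T(p^\top f'(\bar h;\phi)-\lambda^\top\phi)\dt\ge 0$ extends from this dense set (dense in $\cont$, hence in $L^{r'}$) to all $\phi\in L^{r'}(0,T;\mathbb{R}^n)$. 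Testing with $\phi=\pm\mathbf 1_E\,e_i$ for arbitrary measurable $E\subset(0,T)$ and exploiting that $f$ acts component-wise gives, for a.a.\ $t$, the inequalities $p_i(t)f'(\bar h_i(t);1)\ge\lambda_i(t)$ and $p_i(t)f'(\bar h_i(t);-1)\ge -\lambda_i(t)$; recalling $f'_+(z)=f'(z;1)$ and $f'_-(z)=-f'(z;-1)$ these read $p_i f'_-(\bar h_i)\le\lambda_i\le p_i f'_+(\bar h_i)$, which is exactly \eqref{sign_c}.

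I expect the main obstacle to be the density/limit step: one must verify that the reduced functional is genuinely continuous on the larger space $L^{r'}$, so that the bound survives the passage to the dense limit supplied by Lemma \ref{lem:dense}, and that the integrability indices in Lemma \ref{fip} and in the definition of $p$ are mutually compatible — both hinge on the admissible range $r\in[1,1/(1-\gamma))$. The pointwise localization is then routine, relying only on the arbitrariness of $E$ and the positive homogeneity of the directional derivative.
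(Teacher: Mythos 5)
Your proposal is correct and follows essentially the same route as the paper's proof: obtain $\lambda$ and the gradient relations from Lemma \ref{lem:tool}, define $p$ by \eqref{eq:p2}, use fractional integration by parts (Lemma \ref{fip}) together with $\delta y=\io(f'(\bar h;\delta h))$ to replace the endpoint term in \eqref{eq:nec}, cancel the $\delta a$-terms via \eqref{eq:grad_1}, and invoke the density Lemma \ref{lem:dense} before localizing. The only cosmetic difference is that you extend the resulting inequality from $\cont$ to $L^{r'}$ and test with indicator functions, whereas the paper stays in $\cont$ and applies the fundamental lemma of the calculus of variations with signed continuous test functions; both yield \eqref{sign_c}.
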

\begin{remark}
The adjoint (integral) equation \eqref{eq:p2} describes the mild solution of
a differential equation featuring the so-called right Riemann-Liouville operator \cite[Ch.\ 5]{diethelm}:
\begin{equation}\label{r-l}
D^{\gamma}_{T-} (p)(t)=  \bar a^\top(t)\lambda (t) \quad \text{f.a.a.\, }t \in (0,T), \quad \lim_{t \to T}I^{1-\gamma}_{T-} (p)(t)=\nabla g (\bar y(T)),
\end{equation}where \[D^{\gamma}_{T-}  (\phi):=-\frac{d}{dt} I^{1-\gamma}_{T-} (\phi). \]
Here we recall Definition \ref{def:io}. If  $p$ is absolutely continuous, then, together with $\delta y=S'((\bar a,\ll);\dal) $, it satisfies  the relation in\,\cite[Prop.\,2.5]{unified}, which says  that the right Riemann-Liouville operator is the adjoint of the Caputo fractional derivative (Definition \ref{def:caputo}). Note that $\delta y \in W^{1,1}(0,T;\mathbb{R}^n)$; this can be  shown by arguing as in the proof of Theorem \ref{prop:w11}. If $p$ has enough regularity, then $I^{1-\gamma}_{T-} p \in \cont$ and thus, $I^{1-\gamma}_{T-} (p)(T)=\nabla g (\bar y(T)) $, in view of \eqref{r-l}.
\end{remark}

\begin{proof}[Proof of Theorem \ref{thm:st}]
{
From Lemma \ref{lem:tool}, we get the existence of $\lambda\in L^r(0,T;\mathbb{R}^{n })$ satisfying \eqref{eq:strongstat_q}.
}
{This allows us to define an adjoint state $p \in L^r(0,T;\mathbb{R}^{n })$  such that \eqref{eq:p2}, \eqref{eq:grad_1} and \eqref{eq:grad_2} are satisfied.
Note that the $ L^r(0,T;\mathbb{R}^{n })$ regularity of $p$ is a result of Lemmas \ref{lem:int} and \ref{lem:bound}.}
Thus, it remains to show that  \eqref{sign_c} is true.
Let $(\delta a, \delta \ell) \in H^1(0,T;\mathbb{R}^{n \times n}) \times  \cone (\KK-\bar \ell)$ be arbitrary but fixed  and  abbreviate $\widetilde{ \delta y}:=S'((\bar a,\bar \ell);(\delta a,\dl))$ and $f ' (\cdot;\cdot):=f ' (\bar a \bar y+\ll;\bar a \widetilde{ \delta y}+ \delta a \bar  y +\dl)$. Note
that \begin{equation}\label{eq:deltayy}
\widetilde{\delta y} =I_{0+}^{\gamma}(f ' (\cdot;\cdot)),
\end{equation}
 see \eqref{eq:2}.
Now, using \eqref{eq:p2} and \eqref{eq:deltayy} in Lemma \ref{fip} leads to
  \[\int_0^T  {(\bar a^\top\lambda)(t)^\top} \widetilde{ \delta y}(t) \dt =\int_0^T  {f ' (\cdot;\cdot)(t)^\top} [p(t)-\frac{(T-t)^{\gamma-1}}{\Gamma(\gamma)}\nabla g(\bar  y(T))]\dt.\] Thus,
\begin{equation}\label{eqq}
\int_0^T  {f ' (\cdot;\cdot)(t)^\top} p(t)- {(\bar a^\top\lambda)(t)^\top} \widetilde{ \delta y}(t) \dt= {\nabla g(\bar  y(T))^\top} \underbrace{\int_0^T \frac{(T-t)^{\gamma-1}}{\Gamma(\gamma)} f ' (\cdot;\cdot)(t) \dt }_{=\widetilde{ \delta y}(T), \text{ see }\eqref{eq:deltayy}}.
\end{equation}
By inserting  \eqref{eqq}  in \eqref{eq:nec}, we arrive at
\begin{multline}\label{eq:st1}
 \int_0^T {f ' (\cdot;\cdot)(t)^\top} p(t)-{(\bar a^\top\lambda)(t)^\top} \widetilde{ \delta y}(t) \dt
\\
+ (\bar a,\delta a)_{H^1\otm}
+ (\ll,\delta \ell)_{H^1\ot} \geq 0
\\
\forall\, (\delta a,\dl)  \in H^1(0,T;\mathbb{R}^{n \times n})\times  \cone (\KK-\bar \ell).
\end{multline}
Setting $\dl:=0$, taking into account $\widetilde{ \delta y}=S'((\bar a,\bar \ell);(\delta a,0))$ and the definition of $f ' (\cdot;\cdot)$,
and  making use of \eqref{eq:grad_1}, results in
\begin{multline}\label{eq:st}
\int_0^T   {f ' (\cdot;\cdot)(t)^\top} p(t)-  {(\bar a^\top\lambda)(t)^\top} \widetilde{ \delta y}(t) \dt+ (\bar a,\delta a)_{H^1\otm}
\\
\begin{aligned}
\overset{\eqref{eq:grad_1}}{=} & \int_0^T   {p(t)^\top} f ' (\bar a(t) \bar y(t)+\ll(t);\bar a(t) \widetilde{ \delta y}(t)+\delta a(t) \bar  y(t))  {\dt}
\\
&
-   {\int_0^T \lambda(t)^\top}(\bar  a(t) \widetilde{ \delta y}(t)+\delta a(t) \bar  y(t)) \dt
\geq 0 \quad \forall\, \delta a  \in H^1(0,T;\mathbb{R}^{n \times n}).
\end{aligned}
\end{multline}
Now let $\rho \in \cont$ be arbitrary but fixed. According to Lemma \ref{lem:dense} there exists a sequence $\{\delta a_n\} \subset H^1 \otm $ such that
\[\bar a S'((\bar a,\bar \ell);(\delta a_n,0)) +\delta a_n \bar  y \to \rho \quad \text{ in }\cont.\]
Thus, testing with $\delta a_n \in H^1 \otm $ in \eqref{eq:st} and passing to the limit $n \to \infty$ leads to
\begin{equation*}\label{eq:tst}
\int_0^T   {p(t)^\top}f ' (\bar a(t) \bar y(t)+\ll(t);\rho(t))-   {\lambda(t)^\top}\rho(t) \dt
\geq 0 \quad \forall\, \rho \in C ([0,T];\mathbb{R}^n),
 \end{equation*}
 where we relied on the continuity of $f ' (\bar a \bar y+\ll;\cdot):\liv \to \liv$, cf.\,\eqref{f_dir}, and on the fact that $\lambda, p \in L^r \ot$.
Now, by testing with $\rho \geq 0$ and by employing the
 fundamental lemma of calculus of variations combined with  the positive homogeneity of the directional derivative with respect to the direction we deduce
 \begin{align*}
p_i(t)f ' ((\bar a \bar y+\bar \ell)_i(t);1)-\lambda_i(t)
\geq 0 \qquad \text{a.e. in }(0,T), \quad i=1,...n.
 \end{align*}
In an analogous way, testing with $\rho \leq 0$ implies
 \begin{align*}
p_i(t)f ' ((\bar a \bar y+\bar \ell)_i(t);-1)+\lambda_i(t)
\geq 0 \qquad  \text{a.e. in }(0,T), \quad i=1,...n,
 \end{align*}
from which  \eqref{sign_c} follows.
\end{proof}

\begin{remark}[Correspondence to KKT conditions]\label{rem:kkt}
If $(\bar a \bar y+\bar \ell)_i(t) \not \in \NN$ f.a.a.\ $t \in (0,T)$ and for all $i=1,...,n$, where $\NN$ denotes the set of non-smooth points of $f$, then $\lambda_i(t) =p_i(t)f ' ((\bar a \bar y+\bar \ell)_i(t))$ f.a.a.\ $t \in (0,T)$ and for all $i=1,...,n$, cf.\,\eqref{sign_c}. In this case, \eqref{eq:strongstat_q} reduces to the standard KKT-conditions.\end{remark}

The optimality system in Theorem \ref{thm:st} is indeed of strong stationary type, as the next result shows:

\begin{theorem}[Equivalence between B- and strong stationarity]\label{thm:B_qsep}
Let  $(\bar a,\bar \ell) \in H^1(0,T;\mathbb{R}^{n \times n})\times \KK$ be given and let  $\bar y:=S(\bar a, \ll)$ be its associated state. If there exists a  multiplier $\lambda \in L^r(0,T;\mathbb{R}^{n })$ and an adjoint state $p \in L^r(0,T;\mathbb{R}^{n })$, where $r$ is given by Lemma \ref{lem:int}, such that
\eqref{eq:strongstat} is satisfied, then
$(\bar a,\bar \ell) $ also satisfies the variational inequality \eqref{eq:nec}. Moreover, if Assumption \ref{assu:cq} is satisfied, then \eqref{eq:nec} is equivalent to \eqref{eq:strongstat}.
\end{theorem}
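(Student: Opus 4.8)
The plan is to establish the two implications separately. The forward one, from the strong stationarity system \eqref{eq:strongstat} to the variational inequality \eqref{eq:nec}, is the new content and does \emph{not} require Assumption \ref{assu:cq}; it simultaneously proves the first assertion of the theorem. The converse, \eqref{eq:nec} $\Rightarrow$ \eqref{eq:strongstat}, is exactly the statement of Theorem \ref{thm:st}, and this is where the constraint qualification enters.

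To prove \eqref{eq:strongstat} $\Rightarrow$ \eqref{eq:nec}, I would fix an arbitrary admissible direction $(\delta a, \delta \ell) \in H^1\otm \times \cone(\KK - \bar\ell)$, set $\widetilde{\delta y} := S'((\bar a, \bar\ell);(\delta a, \delta\ell))$ and abbreviate $f'(\cdot;\cdot) := f'(\bar a\bar y + \bar\ell; \bar a\widetilde{\delta y} + \delta a\bar y + \delta\ell)$, so that $\widetilde{\delta y} = I_{0+}^{\gamma}(f'(\cdot;\cdot))$ by Theorem \ref{prop:dir}. The first step reproduces the fractional integration-by-parts identity \eqref{eqq} from the proof of Theorem \ref{thm:st} --- which uses only the adjoint equation \eqref{eq:p2}, the representation \eqref{eq:deltayy} and Lemma \ref{fip} --- in the form
\[
\nabla g(\bar y(T))^\top \widetilde{\delta y}(T) = \int_0^T f'(\cdot;\cdot)(t)^\top p(t) - (\bar a^\top\lambda)(t)^\top \widetilde{\delta y}(t)\dt .
\]
The decisive second step is the pointwise inequality $p_i(t) f'((\bar a\bar y + \bar\ell)_i(t); d) \geq \lambda_i(t)\,d$ for every $d \in \mathbb{R}$: by the positive homogeneity of $d \mapsto f'(z;d)$ one has $f'(z;d) = d f'_+(z)$ for $d \geq 0$ and $f'(z;d) = d f'_-(z)$ for $d < 0$, so that $p_i f'(z_i;d) - \lambda_i d$ equals $d(p_i f'_+(z_i) - \lambda_i)$ or $d(p_i f'_-(z_i) - \lambda_i)$, both nonnegative by the interval membership \eqref{sign_c}. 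Evaluating this at $d = (\bar a\widetilde{\delta y} + \delta a\bar y + \delta\ell)_i(t)$, summing over $i$ and integrating gives $\int_0^T f'(\cdot;\cdot)^\top p \dt \geq \dual{\lambda}{\bar a\widetilde{\delta y} + \delta a\bar y + \delta\ell}_{L^r}$.

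Combining the two steps, the $\bar a\widetilde{\delta y}$-contributions cancel (using $(\bar a^\top\lambda)^\top\widetilde{\delta y} = \lambda^\top \bar a\widetilde{\delta y}$) and I obtain $\nabla g(\bar y(T))^\top\widetilde{\delta y}(T) \geq \dual{\lambda}{\delta a\bar y}_{L^r} + \dual{\lambda}{\delta\ell}_{L^r}$. Inserting the gradient equation \eqref{eq:grad_1}, which turns $\dual{\lambda}{\delta a\bar y}_{L^r}$ into $-(\bar a,\delta a)_{H^1\otm}$, together with the variational inequality \eqref{eq:grad_2}, which yields $\dual{\lambda}{\delta\ell}_{L^r} \geq -(\bar\ell,\delta\ell)_{H^1\ot}$, delivers precisely \eqref{eq:nec}. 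For the converse I would simply invoke Theorem \ref{thm:st}: under Assumption \ref{assu:cq} its proof derives the whole system \eqref{eq:strongstat} from \eqref{eq:nec}, using the multiplier relations \eqref{eq:grad_1}--\eqref{eq:grad_2} furnished by Lemma \ref{lem:tool} and the density result Lemma \ref{lem:dense} to pass from \eqref{eq:st} to the sign condition \eqref{sign_c}.

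I expect the main obstacle to be the second step of the forward implication, namely the equivalence between the interval membership \eqref{sign_c} and the directional inequality $p_i f'(z_i;d) \geq \lambda_i d$ for all $d$: this rests on the positive homogeneity of $f'(z;\cdot)$ and on the correct orientation of the interval $[p_i f'_-, p_i f'_+]$, and the accompanying sign bookkeeping in the cancellation of the $\bar a\widetilde{\delta y}$-term is where care is needed. The converse introduces no difficulty beyond what is already settled in Theorem \ref{thm:st}.
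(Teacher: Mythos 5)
Your proposal is correct and follows essentially the same route as the paper: the paper likewise derives from \eqref{sign_c} and positive homogeneity the inequality $\int_0^T p^\top f'(\bar a\bar y+\bar\ell;\rho)-\lambda^\top\rho\,\mathrm{d}t\ge 0$ (stated for all continuous $\rho$ and then tested with $\rho=\bar a\widetilde{\delta y}+\delta a\,\bar y+\delta\ell$, which is the same computation as your pointwise inequality evaluated at that direction), combines it with the integration-by-parts identity \eqref{eqq} and with \eqref{eq:grad_1}--\eqref{eq:grad_2} to obtain \eqref{eq:nec}, and settles the converse by observing that the proof of Theorem \ref{thm:st} uses only \eqref{eq:nec}.
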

\begin{proof}
We first show that \eqref{sign_c} implies
\begin{equation}\label{eq:tst1}
\int_0^T {p(t)^\top}f ' (\bar a(t) \bar y(t)+\ll(t);\rho(t))- {\lambda(t)^\top}\rho(t) \dt
\geq 0  \quad \forall\, \rho \in C ([0,T];\mathbb{R}^n).
 \end{equation}To this end, let
$\rho \in \cont$ and $i=1,...,n$ be arbitrary, but fixed. We denote by $\NN$ the set of non-differentiable points of $f$.
  From  \eqref{sign_c}, we deduce that
  \begin{equation} \label{eq:da} \begin{aligned}
\lambda_i(t)\rho_i(t) =p_i(t) f '((\bar a \bar y+\ll)_i(t))\rho_i(t) \quad \text{a.e.\ where } (\bar a \bar y+\ll)_i \not\in \NN.
\end{aligned} \end{equation}
Further, we define $\NN_i^+:=\{t \in [0,T]:(\bar a \bar y+\ll)_i(t)\in \NN \text{ and } \rho_i(t)>0 \}$ and $\NN_i^-:=\{t \in [0,T]:(\bar a \bar y+\ll)_i(t) \in \NN \text{ and } \rho_i(t)\leq 0 \}$.   Then,  \eqref{sign_c} and the positive homogeneity of the directional derivative with respect to the direction yield
\begin{equation}  \label{eq:db} \begin{aligned}
\lambda_i(t)\rho_i(t) &\leq \left \{
\begin{aligned}
 &  p_i(t)f_+' ((\bar a \bar y+\bar \ell)_i(t))\rho_i(t)
&& \ \ \text{ a.e.\ in  }\NN_{i}^+
\\ &p_i(t)f_-' ((\bar a \bar y+\bar \ell)_i(t))\rho_i(t) && \ \  \text{ a.e.\ in  }\NN_{i}^-
\end{aligned} \right.
\\&\quad =p_i(t) f '((\bar a \bar y+\ll)_i(t);\rho_i(t) )\quad \text{a.e.\ where } (\bar a \bar y+\ll)_i\in \NN.
\end{aligned}
\end{equation}
Now, \eqref{eq:tst1} follows from \eqref{eq:da} and \eqref{eq:db}.

Next, let $(\delta a,\dl) \in H^1(0,T;\mathbb{R}^{n \times n}) \times {\cone(\KK-\bar \ell)}$ be arbitrary but fixed and test \eqref{eq:tst1} with $\bar a \widetilde{\delta y}+  {\delta a \bar  y +\dl},$ where  we abbreviate $\widetilde{\delta y}:=S'((\bar a,\bar \ell);(  {\delta a,\dl}))$. This results in
\begin{equation}\label{eq:sst}
\begin{aligned}
&\int_0^T   {p(t)^\top f ' (\bar a(t) \bar y(t)+\ll(t);(\bar a \widetilde{\delta y}+\delta a  \bar  y +\dl)(t))\dt}
\\&\qquad -  {\int_0^T\lambda(t)^\top (\bar a \widetilde{\delta y}+
\delta a
 \bar  y +\dl)(t)\dt \geq 0 .}
 \end{aligned}
 \end{equation}Then, by using \eqref{eqq} one sees that \eqref{eq:sst} implies
\begin{multline}\label{eq:necc1}
  {\nabla g (\bar y(T))^\top \widetilde {\delta y}(T)} - {\dual{\lambda}{\delta a  \bar  y +\dl}_{L^r(0,T;\mathbb{R}^n)} } \geq 0
  \\
  \forall\, (\delta a,\dl)  \in H^1(0,T;\mathbb{R}^{n \times n}) \times {\cone(\KK-\ll)}.
\end{multline}
Finally \eqref{eq:grad_1}-\eqref{eq:grad_2} in combination with \eqref{eq:necc1} yield that \eqref{eq:nec} is true. Moreover, if Assumption \ref{assu:cq} is satisfied, then \eqref{eq:nec} implies \eqref{eq:strongstat}, see the proof of Theorem \ref{thm:st}. We underline that the only information about the local minimizer that is used in the proof of Theorem \ref{thm:st} is contained in \eqref{eq:nec}.
\end{proof}

 \begin{remark}[Strong stationarity in the case $\gamma=1$]
 If $\gamma =1$, then the state $\bar y$ associated to the local optimum $(\bar a,\bar \ell) \in H^1 \ota$ belongs to $W^{2,2} \ot$; this is a consequence of the statement in Remark \ref{rem:1} combined with the fact that $f(\bar a\bar y+\bar \ell) \in H^{1}(0,T;\mathbb{R}^n)$, since $f \in W^{1,\infty}(\mathbb{R}^n;\mathbb{R}^n)$, as a result of Assumption \ref{assu:stand}.\ref{it:1}. Moreover, by taking a look at \eqref{eq:p2} we see that the adjoint equation reads
  \begin{equation*}
  -p'(t)= \bar a^\top(t)\lambda (t) \quad \forall\, t \in [0,T], \quad p(T)=\nabla g (\bar y(T))
  \end{equation*}for $\gamma=1$. A close inspection of step (III) in the proof of Lemma \ref{lem:tool} shows that $p \in W^{2,2} \ot$ and $\lambda \in \liv$, see \eqref{sign_c}.
 \end{remark}
\subsection*{Some comments regarding the main result}
We end this section by collecting some important remarks concerning Theorem \ref{thm:st}.

\begin{remark}[Density of the set of arguments of $f'((\bar a \bar y+\bar \ell)_i(t);\cdot)$]\label{rem:surj}
The proof of Theorem \ref{thm:st} shows that it is essential that the set of directions  {into }which the non-smooth mapping $f$ is {differentiated} - in the `linearized' state equation associated to $(\bar a,\ll)$ -
is dense in a (suitable) Bochner space (which is  the assertion in Lemma \ref{lem:dense}). This has also been pointed out in \cite[Rem.\,2.12]{st_coup}, where strong stationarity for a coupled non-smooth system is proven.

Let us underline that the `constraint qualification' in Assumption \ref{assu:cq}
is not only due  to the structure of the state equation, but also due to the presence of constraints on $\ell$.
If constraints were imposed on $a$ instead of $\ell$, then there would be no need for a CQ in the sense of Assumption \ref{assu:cq}. An inspection of the proof of Theorem \ref{thm:st} shows that in this case one needs to show that
\[\{\bar a S'((\bar a,\ll);(0,\delta \ell))+\delta \ell:\delta \ell \in H^1\ot\} \dense \cont . \]This is done by arguing as in the proof of Lemma \ref{lem:dense}, where this time, one defines $\widehat \delta \ell:=\rho-\bar a \widehat \delta y.$

Thus, depending on the setting, the  `constraint qualification' may vanish or may read completely differently \cite[Assump.\,2.6]{st_coup}, but it should imply that the set of directions into which $f $ is differentiated -in the ``linearized'' state equation-is dense in an entire space \cite[Lem.\,2.8]{st_coup}, see also \cite[Rem.\,2.12]{st_coup}.

These observations are also consistent with
the result in \cite{paper}. Therein, the direction {into} which one {differentiates} the non-smoothness - in the `linearized' state equation -  is the `linearized' solution operator, such that the counterpart of our Lemma \ref{lem:dense}  is \cite[Lem.\ 5.2]{paper}.
 In \cite{paper}, there is no constraint qualification in the sense of Assumption \ref{assu:cq}; however, the density assumption \cite[Assump. 2.1.6]{paper} can be regarded as such.
In \cite[Rem.\,4.15]{cr_meyer} the authors also acknowledge the necessity of a density condition similar to that described above in order to ensure strong stationarity.
\end{remark}

\begin{remark}[Control constraints]\label{rem:cc}
We point out that we deal with controls $(a,\ell)$ mapping to $(\mathbb{R}^{n})^{n+1}$, whereas the space of functions we want to cover in Lemma \ref{lem:dense} consists of functions that map to $\mathbb{R}^n$ only. This allows us to restrict $n$ controls by constraints (if we look at \eqref{eq:optt} as having $n+1$ controls mapping to $\mathbb{R}^n$.) Indeed, a closer inspection of  the proof of Lemma \ref{lem:dense} shows that one can impose control constraints  on all  columns of the control $a$ {except} the $m-$th column.
This still  implies that
the set of directions {into }which  $f$ is {differentiated} - in the `linearized' state equation - {is dense in} an entire space.
The fact that two or more controls provide advantages in the context of strong stationarity has already been observed in \cite[Sec. 4]{hmw13}. Therein, an additional control has to be considered on the right-hand side of the  VI under consideration  in order to be able to prove strong stationarity, see \cite[Sec. 4]{hmw13} for more details.

The situation changes when, in addition to asking that $\ell \in \KK$, control constraints are imposed on all columns of $a$. In this case, we deal with a fully  control  constrained problem. By looking at the proof of Lemma \ref{lem:dense} we see  that the arguments cannot be applied in this case, see also \cite{mp84, cr_meyer, paper, st_coup} where the same observation was made.
This calls for a different approach in the proof of Theorem \ref{thm:st} and additional ``constraint qualifications'' \cite{wachsm_2014, mcrf}.
\end{remark}

\begin{remark}[Sign condition on the adjoint state. Optimality conditions obtained via smoothening]\label{rem:sign}

(i) An essential information contained in  the strong stationary system \eqref{eq:strongstat} is the fact that \begin{equation}\label{sign}
p_i(t)f_-' ((\bar a \bar y+\bar \ell)_i(t)) \leq p_i(t)f_+' ((\bar a \bar y+\bar \ell)_i(t))
\end{equation}
f.a.a.\ $t \in (0,T),\ i=1,...,n$, see \eqref{sign_c}. This is crucial for showing the implication $\eqref{eq:strongstat} \Rightarrow \eqref{eq:nec}$, which ultimately yields that \eqref{eq:strongstat} is indeed of strong stationary type (see  the proof of Theorem \ref{thm:B_qsep}).

If $f$ is convex or concave around its {non-smooth points}, this translates into a sign condition for the adjoint state. Indeed, if $f:\mathbb{R} \to \mathbb{R}$ is convex around a non-smooth point $z$, this means that $f_-' (z) < f_+' (z) $, and  from \eqref{sign} we have
\begin{equation*}\label{eq:sign}
p_i(t)\geq 0 \quad \text{a.e.\ where }(\bar a \bar y+\bar \ell)_i=z ,\ \  i=1,...,n.\end{equation*}Similarly, in the concave case, $p$ is negative for those pairs $(i,t)$ for which $(\bar a \bar y+\ll)_i(t)$ is  a non-differentiable point of $f$.

{In addition, we note that, if $f$ is piecewise continuously differentiable, \eqref{sign}  implies
 the regularity (cf.\ \cite[Def.\ 7.4.1]{schirotzek}) of the mapping $p_i(t)f:\mathbb{R} \to \mathbb{R}$ at $(\bar a \bar y+\ll)_i(t)$ f.a.a.\,$t \in (0,T)$ and for all $i=1,...,n$, in view of  \cite[Lem.\ C.1]{paper}. See also\, \cite[Rem.\ 6.9]{paper} and \cite{st_coup} for  similar situations.}

(ii) By contrast, optimality systems derived by classical smoothening techniques often lack  a sign  for the adjoint state (and the above mentioned regularity in the sense of \cite[Def.\ 7.4.1]{schirotzek}) eventually along with other information which gets lost in the limit analysis.  See e.g.\ \cite[Prop.\,2.17]{mcrf}, \cite[Sec.\ 4]{paper}, \cite[Thm.\ 4.4]{cr_meyer}, \cite[Thm.\ 2.4]{tiba} (optimal control of non-smooth PDEs) and \cite{mp84}  (optimal control of VIs).
Generally speaking, a sign condition for the adjoint state in those points $(i,t)$ where the argument of  the non-smoothness $f$ in the state equation, in our case $(\bar a \bar y+\ll)_i(t)$, is such that $f$ is not differentiable at $(\bar a \bar y+\ll)_i(t)$, is what ultimately distinguishes a strong stationary optimality system from very `good'    optimality systems obtained by smoothening procedures, cf.\  \cite[Prop.\,2.17]{mcrf} and   \cite[Sec.\ 7.2]{paper}, see also \cite[Rem.\,4.15]{cr_meyer}.      \end{remark}

\appendix
\section{Proof of Lemma \ref{lem:tool}}
\label{sec:a}

\begin{proof}[Proof of Lemma \ref{lem:tool}]
{We associate a state equation to a smooth approximation of the non-differentiable function ${f}$, such that the respective solution mapping is G\^{a}teaux-differentiable (step (I)). }Then, by arguments inspired by e.g.\ \cite{barbu}, it follows that $(\bar a,\ll)$ can be approximated by a sequence of local minimizers of an optimal control problem governed by the regularized state equation (step (II)). Passing to the limit in the adjoint system associated to the regularized optimal control problem finally yields the desired assertion (step (III)). Although many of the arguments are well-known, we give a detailed proof, for completeness and for convenience of the reader.

 (I) Let $\varepsilon>0$ be arbitrary, but fixed.
We begin by investigating the smooth  integral equation
\begin{equation}\label{eq:smooth}
y_\eps(t)=y_0+\int_0^t \frac{(t-s)^{\gamma-1}}{\Gamma(\gamma)} f{_\varepsilon} (a(s)y_\eps(s)+\ell(s))\ds \quad \forall\, t \in [0,T],
 \end{equation}
 where the differentiable function $f_\eps:\mathbb{R} \to \mathbb{R}$ is defined  as \[f_\eps(z):=\int_{-\infty}^{\infty} f(z-\eps s)\varphi(s)\ds,\]
 where $\varphi \in C_c^\infty(\mathbb{R}),\ \varphi \geq 0, \ \supp \varphi \subset [-1,1]$ and $\int_{-\infty}^{\infty} \varphi(s)\ds=1$. Once again, we do not distinguish between $f_\eps:\mathbb{R}^n \to \mathbb{R}^n$ and $f_\eps:\mathbb{R} \to \mathbb{R}$. As in the case of its non-smooth counterpart, $f_\eps:\mathbb{R}^n \to \mathbb{R}^n$ is assumed to satisfy
 for all $z \in \mathbb{R}^n$
\begin{equation*}
f_\eps(z)_i=\widetilde f_\eps(z_i), \quad i=1,...,n
\end{equation*}
 where $\widetilde f_\eps:\mathbb{R} \to \mathbb{R}$ is a smooth function. We observe that
for all $z \in \mathbb{R}$ it holds
  \begin{equation}\label{it:feps1}
f_\varepsilon(z) \to f(z) \quad \text{as } \varepsilon \searrow 0.
  \end{equation}Moreover,
   \begin{equation}\label{it:feps2}
\|f_\varepsilon(z_1)-f_\varepsilon(z_2) \|\leq L\,\|z_1-z_2\| \quad \forall\,z_1,z_2 \in \mathbb{R}^n,  \end{equation}
where $L>0$ is the Lipschitz constant of $f$.

  By employing the exact same arguments as in the proof of Proposition \ref{prop:existence}, one infers that \eqref{eq:smooth} admits a unique solution $y_\eps \in \cont$ for every  $(a,\ell) \in \lia$, which allows us to define the smooth solution mapping \[S_\varepsilon:\lia \ni (a,\ell) \mapsto y_\eps \in  C([0,T];\mathbb{R}^n).\] The operator $S_{\varepsilon}$ is G\^{a}teaux-differentiable and its derivative
  is the unique solution of
 \begin{equation}\label{eq:ode_lin_q_e}
\delta y_\eps(t)=\int_0^t \frac{(t-s)^{\gamma-1}}{\Gamma(\gamma)} f{_\varepsilon} '(a(s)y_\eps(s)+\ell(s))(a(s)\delta y_\eps(s)+\delta a(s) y_\eps(s)+\delta \ell(s))\ds\end{equation}for all $t \in [0,T]$, i.e., $\delta y_\eps=S_\eps'(a,\ell)(\delta a, \delta \ell)$; note that here we use the notation $f{_\varepsilon}'$ for the Jacobi-matrix of $f{_\varepsilon}:\mathbb{R}^n \to \mathbb{R}^n$.
By using the integral formulations \eqref{eq:1} and \eqref{eq:smooth}, Lemma \ref{lem:g} and \eqref{it:feps1}, we obtain the convergence $  S_\varepsilon \al-S(a, \ell) \to 0  \  \text{in } C([0,T];\mathbb{R}^n)
$ as $\eps \searrow 0$. On the other hand, by arguing as in the proof of Proposition \ref{prop:lip} we deduce that $S_\varepsilon$ is Lipschitz-continuous in the sense of  \eqref{eq:loclipS} (with constant independent of $\varepsilon$).
 As a result, we have
  \begin{equation}\label{s_conv}
  S_\varepsilon \ale-S(a, \ell) \to 0  \quad \text{in } C([0,T];\mathbb{R}^n),
  \end{equation}when $\ale \to \al$ in $\lia$.

(II)  Next, we focus on proving that $(\bar a,\ll)$ can be approximated via local minimizers of optimal control problems governed by \eqref{eq:smooth}. To this end, let
\[
B_\rho:=B_{H^1\ota}((\bar a,\ll),  \rho) \cap (H^1\otm \times \KK), \ \rho>0, \] be the ball of local optimality of $(\bar a,\ll)$ and consider the smooth (reduced) optimal control problem
\begin{equation}\label{eq:q_eps}
 \left.
 \begin{aligned}
  \min_{(a,\ell) \in H^1\ota} \quad & J(S_{\eps}(a,\ell),a,\ell)+\frac{1}{2}\|(a,\ell)-(\bar a,\ll)\|_{H^1\ota}^2
  \\
 \text{s.t.} \quad & (a,\ell) \in B_\rho.
 \end{aligned}
 \quad \right\}
\end{equation}  Let us recall here that
\[J(y,a,\ell)=g(y(T)) +\frac{1}{2} \|a\|^2_{H^1(0,T;\mathbb{R}^{n \times n})} +\frac{1}{2} \|\ell\|^2_{H^1(0,T;\mathbb{R}^{n })},\]
where  $g: \mathbb{R}^n \to \mathbb{R}$ is a differentiable, and thus, continuous function.
By the Lipschitz continuity of $S_\eps:L^{\infty}{\ota} \to C([0,T];\mathbb{R}^n)$ and the compact embedding $H^1\ota \embed \embed L^{\infty}\ota$,
we see that \eqref{eq:q_eps} admits a global solution $(a_\eps,\ell_{\eps}) \in B_\rho$. Since $B_\rho$ is weakly closed in $H^1\ota$ we can extract a weakly convergent subsequence
\[(a_\eps,\ell_\eps) \weakly (\widetilde a, \widetilde \ell)\in B_\rho \quad \text{ in }H^1 \ota.\]
For simplicity, we abbreviate in the following   \begin{subequations} \begin{gather}
\JJ(a,\ell):=J(S(a,\ell),a,\ell) ,\label{jj}\\
\JJ_{\eps}(a,\ell):=J(S_{\eps}(a,\ell),a,\ell)+\frac{1}{2}\|(a,\ell)-(\bar a,\ll)\|_{H^1\ota}^2  .\label{jn}
\end{gather}\end{subequations}
Due to \eqref{s_conv} combined with the continuity of $g:\mathbb{R}^n \to \mathbb{R}$,  it holds
\begin{multline}\label{j}
  \JJ(\bar a, \ll)\overset{\eqref{jj}}{=}J(S(\bar a, \ll),\bar a, \ll )=\lim_{\eps \to 0} J(S_{\eps}(\bar a ,\ll),\bar a , \ll)
  \\
  \overset{\eqref{jn}}{=}\lim_{\eps \to 0}  \JJ_{\eps}(\bar a, \ll) \geq \limsup_{\eps \to 0}   \JJ_{\eps}(a_\eps,\ell_{\eps}),
\end{multline}
where in the last inequality we used the fact that $(a_{\eps}, \ell_{\eps})$ is a global minimizer of \eqref{eq:q_eps} and that $(\bar a, \ll)$ is admissible for \eqref{eq:q_eps}. In view of  \eqref{jn}, \eqref{j} can be continued as
   \begin{equation}\label{j1}\begin{aligned}
  \JJ(\bar a, \ll) &\geq \limsup_{\eps \to 0}   J(S_{\eps}(a_{\eps}, \ell_{\eps}),a_{\eps}, \ell_{\eps})+\frac{1}{2}\|(a_{\eps}, \ell_{\eps})-(\bar a, \ll)\|_{H^1\ota}^2
   \\&\quad   \geq \liminf_{\eps \to 0}   J(S_{\eps}(a_{\eps}, \ell_{\eps}),a_{\eps}, \ell_{\eps})+\frac{1}{2}\|(a_{\eps}, \ell_{\eps})-(\bar a, \ll)\|_{H^1\ota}^2
      \\&\qquad   \geq   J(S\alw,\widetilde a, \widetilde \ell)   +\frac{1}{2}\|\alw-(\bar a, \ll)\|_{H^1\ota}^2   \geq   \JJ(\bar a, \ll),
\end{aligned}  \end{equation}
where we used again $H^1\ota \embed \embed \li$ and \eqref{s_conv} combined with the continuity of $g:\mathbb{R}^n \to \mathbb{R}$; note that for the last inequality in \eqref{j1} we employed the fact that $(\widetilde a, \widetilde \ell)\in B_\rho$.
From \eqref{j1} we conclude \[\lim_{\eps \to 0}   J(S_{\eps}(a_{\eps}, \ell_{\eps}),a_{\eps}, \ell_{\eps})+\frac{1}{2}\|(a_{\eps}, \ell_{\eps})-(\bar a, \ll)\|_{H^1\ota}^2 =\JJ(\bar a, \ll).
\]
By arguing as above we also get
\[\lim_{\eps \to 0}   J(S_{\eps}(a_{\eps}, \ell_{\eps}),a_{\eps}, \ell_{\eps})=\JJ(\bar a, \ll),
\]which implies
\begin{equation}\label{al_conv}
(a_{\eps}, \ell_{\eps}) \to (\bar a, \ll)  \quad \text{in }H^1\ota.
  \end{equation}
 As a consequence, \eqref{s_conv} yields
\begin{equation}\label{y_conv}
  y_{\varepsilon} \to \bar y  \quad \text{in } C([0,T];\mathbb{R}^n),
  \end{equation}where we abbreviate $ y_\eps:=S_{\eps}(a_{\eps}, \ell_{\eps}).$
By classical arguments  one then obtains that $\ale$ is a local minimizer for 
\[\min_{ H^1\otm \times \KK} \JJ_\eps \al.\]

(III)   Due to the above established  local optimality of $(a_{\eps}, \ell_{\eps})$ and on account of the differentiability properties of $S_{\eps}$, cf.\ step (I),  we can write down the following necessary optimality condition
 \begin{equation}\label{necc}\begin{aligned}
{ \nabla g(y_{\eps}(T))^{\top}}&S_{\eps}'(a_{\eps}, \ell_{\eps})(\al-\ale)(T)
 \\&\qquad \quad +(2a_{\eps}-\bar a,a-a_\eps)_{H^1(0,T;\mathbb{R}^{n \times n})}
 \\&\quad \qquad \quad +(2 \ell_{\eps}- \ll,\ell-\ell_\eps)_{H^1(0,T;\mathbb{R}^n)} \geq 0
\end{aligned}\end{equation} for all $\al \in H^1(0,T;\mathbb{R}^{n \times n}) \times \KK$.
 Now, let us consider for $t \in [0,T)$
 \begin{equation}\label{eq:p_eps}
 p_\eps(t)= \frac{(T-t)^{\gamma-1}}{\Gamma(\gamma)}\nabla g(y_\eps(T))+\int_t^T \frac{(s-t)^{\gamma-1}}{\Gamma(\gamma)} a_\eps^\top (s)\underbrace{f{_\varepsilon}' (a_\eps y_\eps +\ell_\eps)(s) p_\eps(s)}_{=:\lambda_\eps(s)} \ds .
 \end{equation}
 To see that \eqref{eq:p_eps} admits a unique solution, we argue as in the proof of Proposition \ref{prop:existence}. From Lemma \ref{lem:bound} we know that the operator
$ z \mapsto \itt( a_\eps^\top f{_\varepsilon}' (a_\eps y_\eps +\ell_\eps) z) $ maps continuous functions to continuous functions,  since $a_\eps^\top , \ f{_\varepsilon}' (a_\eps y_\eps +\ell_\eps)  \in \li$.  However, the first term in \eqref{eq:p_eps} is only $L^r-$ integrable with $r$ given by Lemma \ref{lem:int}. This means that (no matter how smooth $z$ is) the fix point operator associated to \eqref{eq:p_eps}, namely
\[z \mapsto \frac{(T-\cdot)^{\gamma-1}}{\Gamma(\gamma)}\nabla g(y_\eps(T))+ \itt( a_\eps^\top f{_\varepsilon}' (a_\eps y_\eps +\ell_\eps) z) \]
 maps only to
 $L^r \ot$  with $r$ given by Lemma \ref{lem:int}. Due to Lemma \ref{lem:bound} we have for all $z_1,z_2 \in L^r(0,t^*;\mathbb{R}^n)$ the estimate
 \begin{equation*}
\begin{aligned}
\|\itt( a_\eps^\top f{_\varepsilon}' (a_\eps y_\eps +\ell_\eps)& (z_1-z_2))\|_{L^r(0,t^*;\mathbb{R}^n)} \\&\leq \frac{(t^*)^\gamma}{\gamma \Gamma (\gamma)} L\, \|a_\eps\|_{\li}\|z_1-z_2\|_{L^r(0,t^*;\mathbb{R}^n)}\end{aligned}\end{equation*}
and by arguing exactly  as in the proof of Proposition \ref{prop:existence} one obtains  that \eqref{eq:p_eps} admits a unique solution $p_\eps \in L^r \ot$ with $r$ given by Lemma \ref{lem:int}. This immediately implies that $\lambda_\eps \in L^r \ot,$ since $f{_\varepsilon}' (a_\eps y_\eps +\ell_\eps) \in \li$.

 Next, let ${\al} \in H^1(0,T;\mathbb{R}^{n \times n}) \times \KK$ be arbitrary but fixed. We abbreviate
 \[
 \delta y_\eps:=S_\eps' (a_\eps,\ell_\eps)(\al-\ale)
 \]
 and
 \[
 f{_\varepsilon}' (\cdot)(\cdot):=f{_\varepsilon}' (a_\eps y_\eps+\ell_\eps)( a _\eps \delta y_\eps+(a -a_\eps)y_\eps+\ell-\ell_\eps),
 \]
 which implies
 \begin{equation}\label{deltaye}
 \delta y_\eps =I_{0+}(f{_\varepsilon}' (\cdot)(\cdot)),
 \end{equation}
  on account of \eqref{eq:ode_lin_q_e}.
Now, in the light of Lemma \ref{fip} combined with the identities \eqref{eq:p_eps} and \eqref{deltaye} we have
\[
  \int_0^T  {( a_\eps^\top \lambda_\eps)(t)^\top} \delta y_\eps(t) \dt =\int_0^T  {f{_\varepsilon}' (\cdot)(\cdot)(t)^\top} \Big[p_\eps(t)-\frac{(T-t)^{\gamma-1}}{\Gamma(\gamma)}\nabla g(y_\eps(T))\Big]\dt.
\]
  Thus, we obtain
\begin{equation}\label{eqq1}
\begin{aligned}
\int_0^T {f{_\varepsilon}' (\cdot)(\cdot)(t)^\top}p_\eps(t) &-  {( a_\eps^\top \lambda_\eps)(t) ^\top}\delta y_\eps(t) \dt
\\&= {\nabla g(y_\eps(T))^\top} \underbrace{\int_0^T \frac{(T-t)^{\gamma-1}}{\Gamma(\gamma)} f ' (\cdot)(\cdot)(t) \dt }_{=\delta y_\eps(T), \text{ see }\eqref{deltaye}}.
\end{aligned}\end{equation}
Since $\lambda_\eps = f{_\varepsilon}' (a_\eps y_\eps +\ell_\eps) p_\eps$, we can simplify the left-hand side of the above equation as
\begin{equation}\label{eqq2}
\int_0^T {f{_\varepsilon}' (\cdot)(\cdot)(t)^\top}p_\eps(t) -  {( a_\eps^\top \lambda_\eps)(t) ^\top}\delta y_\eps(t) \dt
= \int_0^T  {\lambda_\eps(t)^\top}(  ( a -a_\eps)y_\eps+ \ell-\ell_\eps)(t) \dt.
\end{equation}
Inserting \eqref{eqq1} and \eqref{eqq2} in \eqref{necc} leads to
\begin{equation}\label{eq:sttt}
 \begin{aligned}
  0 & \le \int_0^T {f{_\varepsilon}' (\cdot)(\cdot)(t)^\top}p_\eps(t) -  {( a_\eps^\top \lambda_\eps)(t) ^\top}\delta y_\eps(t) \dt \\
    & \qquad +(2a_{\eps}-\bar a, a-a_\eps)_{H^1(0,T;\mathbb{R}^{n \times n})} +(2\ell_{\eps}- \ll, \ell-\ell_\eps)_{H^1(0,T;\mathbb{R}^{n})} \\
& =\dual{\lambda_\eps}{ ( a -a_\eps)y_\eps+\ell-\ell_\eps}_{L^r(0,T;\mathbb{R}^n)}
\\
& \qquad +(2a_{\eps}-\bar a,a-a_\eps)_{H^1(0,T;\mathbb{R}^{n \times n})} +(2\ell_{\eps}- \ll, \ell-\ell_\eps)_{H^1(0,T;\mathbb{R}^{n})}.
 \end{aligned}
 \end{equation}
Setting $(a,\ell)$ in \eqref{eq:sttt} to $(a_\eps \pm \da,\ell_\eps)$, $\da \in H^1(0,T;\mathbb{R}^{n \times n})$, and $(a_\eps, \ell)$, $\ell\in\KK$, respectively, yields
\begin{subequations}\label{eq:epsss}
 \begin{gather}
(2 a_\eps-\bar a,\delta a)_{H^1(0,T;\mathbb{R}^{n \times n})}+ \dual{ \lambda_\eps}{ \delta a \,y_\eps}_{L^r(0,T;\mathbb{R}^{n })}=0 \quad \forall\,\delta a \in H^1(0,T;\mathbb{R}^{n \times n}),
\\[1mm] (2\ell_{\eps}- \ll, \ell-\ell_\eps)_{H^1(0,T;\mathbb{R}^n)}+ \dual{\lambda_\eps}{ \ell -\ell_\eps}_{L^r(0,T;\mathbb{R}^{n })} \geq 0 \quad \forall\,  \ell \in \KK.
 \end{gather}
 \end{subequations}
The next step is to show that $p_\eps$ and hence $\lambda_\eps$ are bounded independently of $\eps$.
From \eqref{eq:p_eps} we further obtain
 \begin{equation*}\begin{aligned}
  p_\eps(T-t) &=  \frac{t^{\gamma-1}}{\Gamma(\gamma)}\nabla g(y_\eps(T))
  \\&+\int_{0}^{t} \frac{(t-s)^{\gamma-1}}{\Gamma(\gamma)} a_\eps^\top (T-s)f{_\varepsilon}' (a_\eps \bar y_\eps +\ell_\eps)(T-s) p_\eps(T-s) \ds \quad \forall\,t \in (0,T].\end{aligned} \end{equation*}
We abbreviate $\widetilde p_\eps:=p(T-\cdot)$. Since $\|f{_\varepsilon}' (a_\eps y_\eps+\ell_\eps)\|_{\li} \leq L$, we have the estimate  \begin{equation*}\begin{aligned}
 \| \widetilde p_\eps(t) \| \leq c_1 t^{\gamma-1}+\int_{0}^{t} \frac{(t-s)^{\gamma-1}}{\Gamma(\gamma)} c_2\,L\,\|\widetilde p_\eps(s)\| \ds,\quad t\in (0,T],\end{aligned} \end{equation*}
 where $c_1,c_2>0$ are independent of $\eps$. Here we also used that $\{\|a_\eps\|_{\li}\}$, $\{ \|\nabla g(y_\eps(T))\|_{\mathbb{R}^n}\}$ are uniformly bounded with respect to $\eps$, cf., \eqref{al_conv} and \eqref{y_conv}; recall that $g:\mathbb{R}^n \to \mathbb{R}$ is continuously differentiable, by assumption.
In view  of Lemma \ref{lem:g}, this implies
 \begin{equation*}\begin{aligned}
 \| \widetilde p_\eps(t) \| \leq C\,c_1  t^{\gamma-1},\quad t\in (0,T].\end{aligned} \end{equation*}
 Thus, by employing again $\|f{_\varepsilon}' (a_\eps y_\eps+\ell_\eps)\|_{\li} \leq 1,$ we have  $\|\lambda_\eps\|_{L^r \ot} \leq \tilde C$ with $r\in (1,\frac1{1-\gamma})$ given by Lemma \ref{lem:int}, and we can extract a weakly  convergent subsequence
 \begin{equation*}\begin{aligned}
 \lambda_\eps \weakly^* \lambda \quad \text{in }L^r \ot. \end{aligned} \end{equation*}
 Passing to the limit in \eqref{eq:epsss} and using \eqref{al_conv}, \eqref{y_conv} now yields
 \begin{align*}
(\bar a,\delta a)_{H^1(0,T;\mathbb{R}^{n \times n})}+  {\dual{ \lambda}{\delta a \,\bar y}_{L^r(0,T;\mathbb{R}^{n \times n})}}&=0 \quad \forall\,\delta a \in {H^1(0,T;\mathbb{R}^{n \times n})},
\\[1mm] (\ll, \ell-\bar \ell)_{H^1(0,T;\mathbb{R}^n)}+  {\dual{\lambda}{  \ell -\bar \ell}_{L^r(0,T;\mathbb{R}^{n })}} &\geq 0 \quad \forall\,  \ell \in \KK.
 \end{align*}
 The proof is now complete.
\end{proof}

\bibliographystyle{plain}
\bibliography{strong_stat_coupled_pde}

\begin{thebibliography}{10}

\bibitem{OPAgrawal_2004a}
O.~P. Agrawal.
\newblock A general formulation and solution scheme for fractional optimal
  control problems.
\newblock {\em Nonlinear Dynam.}, 38(1-4):323--337, 2004.

\bibitem{fb}
H.~Antil, T.~S. Brown, R.~Lohner, F.~Togashi, and D.~Verma.
\newblock Deep neural nets with fixed bias configuration.
\newblock {\em Numerical Algebra, Control and Optimization}, 2022.

\bibitem{HAntil_HDiaz_Eherberg_2022a}
H.~Antil, H.~D{\'\i}az, and E.~Herberg.
\newblock An {Optimal} {Time} {Variable} {Learning} {Framework} for {Deep}
  {Neural} {Networks}.
\newblock Technical report, arXiv, April 2022.
\newblock arXiv:2204.08528.

\bibitem{HAntil_HCElman_AOnwunta_DVerma_2021a}
H.~Antil, H.~C. Elman, A.~Onwunta, and D.~Verma.
\newblock Novel deep neural networks for solving bayesian statistical inverse
  problems.
\newblock Technical report, arXiv, February 2021.
\newblock arXiv:2102.03974.

\bibitem{unified}
H.~Antil, C.~G. Gal, and M.~Warma.
\newblock A unified framework for optimal control of fractional in time
  subdiffusive semilinear {PDE}s.
\newblock {\em Discrete and Continuous Dynamical Systems Series S},
  15(8):1883--1918, 2022.

\bibitem{HAntil_RKhatri_RLohner_DVerma_2020a}
H.~Antil, R.~Khatri, R.~L{\"o}hner, and D.~Verma.
\newblock Fractional deep neural network via constrained optimization.
\newblock {\em Machine Learning: Science and Technology}, 2(1):015003, dec
  2020.

\bibitem{HAntil_EOtarola_AJSalgado_2015a}
H.~Antil, E.~Ot{\'a}rola, and A.~J. Salgado.
\newblock A {S}pace-{T}ime {F}ractional {O}ptimal {C}ontrol {P}roblem:
  {A}nalysis and {D}iscretization.
\newblock {\em SIAM J. Control Optim.}, 54(3):1295--1328, 2016.

\bibitem{Barbu:1981:NCD}
V.~Barbu.
\newblock Necessary conditions for distributed control problems governed by
  parabolic variational inequalities.
\newblock {\em SIAM Journal on Control and Optimization}, 19(1):64--86, 1981.

\bibitem{barbu}
V.~Barbu.
\newblock {\em Optimal control of variational inequalities}.
\newblock Research notes in mathematics 100, Pitman, Boston-London-Melbourne,
  1984.

\bibitem{st_coup}
L.~Betz.
\newblock Strong stationarity for optimal control of a non-smooth coupled
  system: Application to a viscous evolutionary {VI} coupled with an elliptic
  {PDE}.
\newblock {\em SIAM J.\ on Optimization}, 29(4):3069--3099, 2019.

\bibitem{mcrf}
L.~Betz.
\newblock Strong stationarity for a highly nonsmooth optimization problem with
  control constraints.
\newblock {\em Mathematical Control and Related Fields,
  doi={10.3934/mcrf.2022047}}, 2022.

\bibitem{by18}
L.~Betz and I.~Yousept.
\newblock Optimal control of elliptic variational inequalities with bounded and
  unbounded operators.
\newblock {\em Mathematical Control and Related Fields}, 11(3):479--498, 2021.

\bibitem{Bittner1975}
L.~Bittner.
\newblock On optimal control of processes governed by abstract functional,
  integral and hyperbolic differential equations.
\newblock {\em Math. Operationsforsch. Statist.}, 6(1):107--134, 1975.

\bibitem{cc}
C.~Christof.
\newblock Sensitivity analysis and optimal control of obstacle-type evolution
  variational inequalities.
\newblock {\em SIAM J. Control Optim.}, 57(1):192--218, 2019.

\bibitem{brok_ch}
C.~Christof and M.~Brokate.
\newblock Strong stationarity conditions for optimal control problems governed
  by a rate-independent evolution variational inequality.
\newblock arXiv:2205.01196, 2022.

\bibitem{cr_meyer}
C.~Christof, C.~Clason, C.~Meyer, and S.~Walther.
\newblock Optimal control of a non-smooth, semilinear elliptic equation.
\newblock {\em Mathematical Control and Related Fields}, 8(1):247--276, 2018.

\bibitem{quasi_nonsmooth}
C.~Clason, V.H. Nhu, and A.~R\"osch.
\newblock Optimal control of a non-smooth quasilinear elliptic equation.
\newblock {\em Mathematical Control and Related Fields}, 11(3):521--554, 2021.

\bibitem{DelosReyes-Meyer}
J.~C. De~los Reyes and C.~Meyer.
\newblock Strong stationarity conditions for a class of optimization problems
  governed by variational inequalities of the second kind.
\newblock {\em Journal of Optimization Theory and Applications},
  168(2):375--409, 2016.

\bibitem{diethelm}
K.~Diethelm.
\newblock {\em The Analysis of Fractional Differential Equations}.
\newblock Lecture Notes in Mathematics. Springer, Berlin, 2004.

\bibitem{gronwall}
C.~M. Elliott and S.~Larsson.
\newblock Error estimates with smooth and nonsmooth data for a finite element
  method for the {C}ahn-{H}illiard equation.
\newblock {\em Mathematics of Computation}, 58(198):603--630, 1992.

\bibitem{evans}
L.C. Evans.
\newblock {\em Partial Differential Equations}.
\newblock American Mathematical Society, 2010.

\bibitem{gk}
C.~Geiger and C.~Kanzow.
\newblock {\em Theorie und Numerik restringierter Optimierungsaufgaben}.
\newblock Springer, 2002.

\bibitem{GoldbergTroltzsch1989}
H.~Goldberg and F.~Tr\"{o}ltzsch.
\newblock Second order optimality conditions for a class of control problems
  governed by nonlinear integral equations with application to parabolic
  boundary control.
\newblock {\em Optimization}, 20(5):687--698, 1989.

\bibitem{He_et_al_2016a}
K.~He, X.~Zhang, S.~Ren, and J.~Sun.
\newblock Deep residual learning for image recognition.
\newblock In {\em 2016 IEEE Conference on Computer Vision and Pattern
  Recognition (CVPR)}, pages 770--778, 2016.

\bibitem{henry}
D.~Henry.
\newblock {\em Geometric Theory of Semilinear Parabolic Equations}.
\newblock Springer Verlag, Berlin, 1981.

\bibitem{hmw13}
R.~Herzog, C.~Meyer, and G.~Wachsmuth.
\newblock B- and strong stationarity for optimal control of static plasticity
  with hardening.
\newblock {\em SIAM J. Optim.}, 23(1):321--352, 2013.

\bibitem{HintermuellerKopacka2008:1}
M.~Hinterm{\"u}ller and I.~Kopacka.
\newblock Mathematical programs with complementarity constraints in function
  space: {C}- and strong stationarity and a path-following algorithm.
\newblock {\em SIAM Journal on Optimization}, 20(2):868--902, 2009.

\bibitem{ItoKunisch2008:1}
K.~Ito and K.~Kunisch.
\newblock Optimal control of parabolic variational inequalities.
\newblock {\em Journal de Math{\'e}matiques Pures et Appliqu{\'e}s},
  93(4):329--360, 2010.

\bibitem{book}
A.~A. Kilbas, H.~M. Srivastava, and J.~J. Trujillo.
\newblock {\em Theory and applications of fractional differential equations},
  volume 204 of {\em North-Holland Mathematics Studies}.
\newblock Elsevier Science B.V., Amsterdam, 2006.

\bibitem{paper}
C.~Meyer and L.M. Susu.
\newblock Optimal control of nonsmooth, semilinear parabolic equations.
\newblock {\em SIAM Journal on Control and Optimization}, 55(4):2206--2234,
  2017.

\bibitem{mp76}
F~Mignot.
\newblock Contr{\^{o}}le dans les in{\'{e}}quations variationelles elliptiques.
\newblock {\em Journal of Functional Analysis}, 22(2):130--185, 1976.

\bibitem{mp84}
F.~Mignot and J.-P. Puel.
\newblock Optimal control in some variational inequalities.
\newblock {\em SIAM Journal on Control and Optimization}, 22(3):466--476, 1984.

\bibitem{ruthotto2020deep}
L.~Ruthotto and E.~Haber.
\newblock Deep neural networks motivated by partial differential equations.
\newblock {\em Journal of Mathematical Imaging and Vision}, 62(3):352--364,
  2020.

\bibitem{ScheelScholtes2000}
H.~Scheel and S.~Scholtes.
\newblock Mathematical programs with complementarity constraints: Stationarity,
  optimality, and sensitivity.
\newblock {\em Mathematics of Operations Research}, 25(1):1--22, 2000.

\bibitem{schirotzek}
W.~Schirotzek.
\newblock {\em Nonsmooth Analysis}.
\newblock Springer, Berlin, 2007.

\bibitem{stynes}
M.~Stynes.
\newblock Too much regularity may force too much uniqueness.
\newblock {\em Fractional Calculus and Applied Analysis}, 19(6):1554--1562,
  2016.

\bibitem{tiba}
D.~Tiba.
\newblock {\em Optimal control of nonsmooth distributed parameter systems}.
\newblock Springer, 1990.

\bibitem{troe}
F.~Tr{\"o}ltzsch.
\newblock {\em Optimal Control of Partial Differential Equations}, volume 112
  of {\em Graduate Studies in Mathematics}.
\newblock American Mathematical Society, Providence, 2010.
\newblock Theory, methods and applications, Translated from the 2005 German
  original by J{\"u}rgen Sprekels.

\bibitem{wachsm_2014}
G.~Wachsmuth.
\newblock Strong stationarity for optimal control of the obstacle problem with
  control constraints.
\newblock {\em SIAM Journal on Optimization}, 24(4):1914--1932, 2014.

\bibitem{e_qvi}
G.~Wachsmuth.
\newblock Elliptic quasi-variational inequalities under a smallness assumption:
  uniqueness, differential stability and optimal control.
\newblock {\em Calc. Var. Partial Differential Equations}, 59(2):Paper No. 82,
  15, 2020.

\bibitem{Wolfersdorf1976}
L.~Wolfersdorf.
\newblock Optimal control of a class of processes described by general integral
  equations of {H}ammerstein type.
\newblock {\em Math. Nachr.}, 71:115--141, 1976.

\bibitem{YeGaoDing2007}
H.~Ye, J.~Gao, and Y.~Ding.
\newblock A generalized {G}ronwall inequality and its application to a
  fractional differential equation.
\newblock {\em J. Math. Anal. Appl.}, 328(2):1075--1081, 2007.

\end{thebibliography}

\end{document}